%

\documentclass[aap,MSNbibl,citesort,seceqn,dvips]{arximspdf}
\usepackage{graphicx}

%

\doi{10.1214/12-AAP840} 
\volume{23}
\issue{1}
\pubyear{2013}
\firstpage{283}
\lastpage{307}

\makeatletter
\newcommand{\eqref}[1]{(\ref{#1})}
%
\newtheorem{theorem}{Theorem}[section]
\newtheorem{lemma}[theorem]{Lemma}
\newtheorem{proposition}[theorem]{Proposition}
\newtheorem{corollary}[theorem]{Corollary}
\newtheorem{conj}[theorem]{Conjecture}
\newproclaim{remark}{Remark}
\newproclaim{fact}[theorem]{Fact}
%








\newcommand{\bbN}{\mathbb{N}} 


\newcommand{\E}{\mathbb{E}}
\renewcommand{\P}{\mathbb{P}}

\newcommand{\eps}{\varepsilon}



\newcommand{\gem}{\operatorname{GEM}}
\newcommand{\pd}{\operatorname{PD}}


\newcommand{\lam}{\lambda}
\renewcommand{\b}{\beta}
\newcommand{\order}{\asymp} 

\newcommand{\HH}{\mathcal{H}}
\newcommand{\FF}{\mathcal{F}}
\newcommand{\iii}{\mathcal{I}}
\newcommand{\WW}{\mathcal{W}}
\newcommand{\XX}{\mathcal{X}}
\newcommand{\bW}{\mathbf{W}}
\newcommand{\bv}{\mathbf{v}}

\makeatother

\begin{document}
\begin{frontmatter}

\title{Poisson--Dirichlet branching random walks}
\runtitle{Poisson--Dirichlet branching random walks}

\begin{aug}
\author[A]{\fnms{Louigi} \snm{Addario-Berry}\corref{}\ead[label=e1]{louigi@math.mcgill.ca}\thanksref{t1}}
\and
\author[B]{\fnms{Kevin} \snm{Ford}\ead[label=e2]{ford@math.uiuc.edu}\thanksref{t2}}
\thankstext{t1}{Supported by an NSERC Discovery Grant.}
\thankstext{t2}{Supported by NSF Grant DMS-09-01339.
The research was conducted in part while the K. Ford
was visiting the Institute for Advanced Study, supported by grants from the
Ellentuck Fund and The Friends of the Institute For Advanced Study.}
\runauthor{L. Addario-Berry and K. Ford}
\affiliation{McGill University and University of Illinois at Urbana-Champaign}
\address[A]{McGill University\\
1005-805 Sherbrooke West\\
Montreal, Quebec\\
H3A 2K6, Canada\\
\printead{e1}} 
\address[B]{Department of Mathematics\\
University of Illinois at Urbana-Champaign\\
1409 West Green St., Urbana\\
Illinois 61801, USA\\
\printead{e2}}
\end{aug}

\received{\smonth{2} \syear{2011}}
\revised{\smonth{8} \syear{2011}}

%
\begin{abstract}
We determine, to within $O(1)$, the expected minimal position at level $n$
in certain branching random walks. The walks under consideration
have displacement vector $(v_1,v_2,\ldots)$, where each $v_j$ is the sum
of $j$ independent $\operatorname{Exponential}(1)$ random variables
and the different
$v_i$ need not be independent.
In particular, our analysis applies to the Poisson--Dirichlet branching
random walk and to the Poisson-weighted infinite tree.
As a corollary, we also determine the expected height of a random
recursive tree to within $O(1)$.
\end{abstract}

%
\begin{keyword}[class=AMS]
\kwd{60J80}.
\end{keyword}
\begin{keyword}
\kwd{Branching random walk}
\kwd{random recursive tree}
\kwd{Pratt tree}
\kwd{heights of trees}.
\end{keyword}

\end{frontmatter}

\section{Introduction}\label{S:intro}
A branching random walk starts from an initial particle, the \textit{root}, with position $0$. The
root produces some number of children, who are randomly displaced from
their parent
according to some displacement law.
Each child in turn produces some number of children, who are displaced
from the position of their parent according to the same law; and so on.
In general, the displacements of siblings relative to their parent may
be dependent, but for distinct
particles $v$ and $w$, the displacements of the children of $v$ and of
the children of $w$ must be independent.
When the displacements are nonnegative, this is often called an age-dependent
branching process, and the displacements are thought of as ``times to birth.''

There is a natural tree associated with a branching random walk,
where the vertices correspond to particles, and an edge from
parent to child is weighted with the child's displacement from its parent.
More precisely, let $T$ be the \textit{Ulam--Harris} tree,\vspace*{1pt}
which has vertex set $V = \bigcup_{n=0}^{\infty} \bbN^n$ (we think
of elements of $\bbN^n$ as concatenations of $n$ integers, and take
$\bbN^0=\{\varnothing\}$),
is rooted at $\varnothing$, and has an edge from $v$ to $vi$ for each $v
\in V$ and each $i \in\bbN$.
We call $\bbN^n$ the $n$th generation of $T$, and for $v=v_1,\ldots, v_n
\in\bbN^n$, we say that $v$ has \textit{parent}
$p(v)=v_1,\ldots, v_{n-1}$ and \textit{children} $vi$, $i \in\bbN$. (We
will usually write $T_n$ in place of $\bbN^n$ for readability.)

Now suppose $\mathbf{X}=(X_i\dvtx i \in\bbN)$ is a random vector, where
each $X_i \in\mathbb{R}\cup\{+\infty\}$.
We do \textit{not} require that the entries of $\mathbf{X}$ are
independent of one another---this will be important below.
Then we form a branching random walk by marking each vertex $v \in V$
with an independent copy $\mathbf{X}^v=(X^v_i\dvtx i \in\bbN)$ of
$\mathbf{X}$. Write $\mathcal{T}$ for the pair $(T,\{\mathbf
{X}^v\dvtx v
\in
V\})$; then $\mathcal{T}$ is our branching random walk. We call
$\mathbf
{X}$ the
\textit{displacement vector} of $\mathcal{T}$.
\setcounter{footnote}{2}\footnote{For the formal details of a
probabilistic construction
of branching random walks, see, for example,~\cite
{harris63branching}.}
For each $v \in V$ and $i \in\bbN$, we regard $X^v_i$ as the
displacement from $v$ to $vi$, and let
$S(v)=S(v,\mathcal{T})$ be the sum of the displacements on the path
from the root to $v$ [formally, if $v=v_1,\ldots, v_n$, then $S(v) = \sum
_{i=1}^n X^{p(v_1,\ldots, v_i)}_{v_i}$, and this sum is taken to be
$+\infty$ if any of its elements are $+\infty$]. We say $\mathcal{T}$
has \textit{finite branching} if almost surely all but finitely many
coordinates of $\mathbf{X}$ are equal to $+\infty$.

For $n\in\bbN$, let $M_n=\inf(S(v)\dvtx v \in\bbN^n)$. In all situations
we consider in this paper, this infimum is attained, so $M_n$ is the
minimal displacement of any individual in the $n$th generation. The
minimal displacement is one of the most well-studied parameters
associated with branching random walks.
It has been known since the 1970s~\cite{hammersly74postulates,kingman75first,biggins76first}
that under quite
general conditions, $M_n$
grows asymptotically linearly with lower-order corrections. Recently
there have been substantial developments in understanding the finer
behavior of $M_n$ on two fronts: first, convergence results for the
lower order corrections~\cite{addario07brw,aidekon10weak,hu2009}; and
second, the concentration of $M_n$ about its mean (or median)
\cite{addario07brw,bramson06tightness,chauvin05random}. We refer to these as
the global behavior and the local behavior of $M_n$, respectively.
Under suitable conditions, $M_n$ generally seems to exhibit the
following behavior: for some constants $\alpha\in\mathbb{R}$ and
$\beta> 0$,
$\operatorname{median}(M_n) = \alpha n+\beta\log n + O(1)$, and,
furthermore, $M_n/ n \to\alpha$ almost surely and $(M_n - \alpha
n)/\log n \to\beta$ in probability
(but \textit{not} almost surely~\cite{hu2009}). Also, under sufficiently
strong moment conditions for the displacements,
$\E\{ \exp(\gamma|M_n-\E{M_n}|) \}<\infty$ for some $\gamma> 0$
and all
$n$. (In fact, in some cases the upper tail of $M_n-\E{M_n}$ is even
known to decay doubly-exponentially quickly~\cite{bachman_brw,ford09pratt}.)

To date, however, all the results of the kind described in the
preceding paragraph that we are aware of
require that the branching random walk has finite branching. In this
paper we study the global behavior of $M_n$ for a class of branching
random walks which
\textit{do not} have finite branching. The class we consider is rather
restricted but nonetheless contains at least two interesting special
cases, one related to the factorization of random integers, and one
related to the analysis of algorithms. Say that $\mathbf{X}$ has
\textit{exponential steps} if for all $i$, $X_i$ is distributed as the
sum of
$i$ independent $\operatorname{Exponential}(1)$ random variables. The
main result of
this paper is the following theorem. For short, we denote
\newcommand{\tM}{\widetilde{M}}
\newcommand{\er}{e}
\[
\tM_n = \operatorname{median}(M_n) := \sup\bigl\{x\dvtx\P\{M_n < x\} <
1/2\bigr\}.
\]

\begin{theorem} \label{T1}
If $\mathbf{X}$ has exponential steps, then
\[
\tM_n = \frac{n}{e}+\frac{3}{2e}\log n + O(1).
\]
\end{theorem}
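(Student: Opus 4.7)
The plan is to prove matching $O(1)$ lower and upper bounds on $\tM_n - n/e - (3/(2e))\log n$. The foundational tool is a universal \emph{first-moment identity}: because $\sum_{i\ge 1}\frac{x^{i-1}e^{-x}}{(i-1)!}=1$, one has $\E[\sum_{i\ge 1} g(X_i)]=\int_0^\infty g(x)\,dx$ for any joint law of $\mathbf X$ with the prescribed marginals and any $g\ge 0$. Iterating this via branching independence across distinct vertices yields
\[
\E\Bigl[\sum_{v\in T_n} F(S(v_1),\dots,S(v_n))\Bigr] = \int_{[0,\infty)^n} F(x_1, x_1+x_2,\dots,x_1+\cdots+x_n)\,dx_1\cdots dx_n,
\]
so in particular $\E[\#\{v\in T_n:S(v)\le t\}]=t^n/n!$. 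Stirling and Markov then already give $\tM_n\ge n/e+(\log n)/(2e)-O(1)$, which is the ``entropy'' piece of the correction.

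\emph{Lower bound.} To bridge the missing $\tfrac{1}{e}\log n$, I would run a barrier first-moment argument in the Biggins--Bramson style. With a logarithmically curved barrier $g(k)=k/e-A-B\log(k\wedge(n-k+1))$ for $B>1/(2e)$ and a large constant $A$, I decompose
\[
\P\{M_n\le t_n\}\le \sum_{k=1}^n \P\{\exists v\in T_k: S(v)\le g(k)\} + \E\bigl[\#\{v\in T_n: S(v)\le t_n,\, S(v_j)\ge g(j)\;\forall j\}\bigr].
\]
The first sum equals $\sum_k g(k)^k/k!$ by the first-moment identity and is $O(e^{-eA})$ once $B>1/(2e)$. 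For the second, an exponential tilt by the critical parameter $\lambda=e$ (under which the step law becomes i.i.d.\ $\mathrm{Exp}(e)$ with mean $1/e$ and variance $1/e^2$) rewrites it as $e^{er_n}\,\P_{\mathrm{tilt}}\{\tilde S_k\ge -h(k)\;\forall k,\ \tilde S_n\le r_n\}$, where $r_n=t_n-n/e$ and $\tilde S_k=S_k-k/e$ is a centred random walk. Classical ballot/Brownian-bridge estimates for such a walk under a logarithmically curved barrier bound this probability by $O(n^{-3/2})$; since $e^{er_n}=n^{c}e^{eK}$ when $t_n=n/e+(c/e)\log n+K$, the product is $O(n^{c-3/2}e^{eK})$, which at the sharp $c=3/2$ becomes $O(e^{eK})$ and is driven below $1/2$ by choosing $K$ sufficiently negative. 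This yields $\tM_n\ge n/e+(3/(2e))\log n-O(1)$.

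\emph{Upper bound.} The direction $\tM_n\le n/e+(3/(2e))\log n+O(1)$ is the main obstacle, because first-moment identities only bound probabilities from above, and the two-point factorial moment of $(X_i)$ genuinely depends on the joint law of $\mathbf X$. My plan is a truncated second-moment/Paley--Zygmund argument applied to $N=\#\{v\in T_n: S(v)\in[t_n-1,t_n],\, g(k)\le S(v_k)\le G(k)\;\forall k\}$, with barriers chosen so that the first-moment identity and ballot estimates give $\E[N]\asymp 1$. To compute $\E[N^2]$, I would decompose pairs $(v,w)$ by their last common ancestor generation $k$: branching independence makes the two sub-BRWs from generation $k{+}1$ to $n$ conditionally independent, but the contribution at level $k$ involves the two-point intensity $\sum_{i\ne j}\P(X_i\in dx, X_j\in dy)$, which is $dx\,dy$ for the PWIT (Poisson case) but depends on the joint law in general. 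I would address this either (a) by proving the upper bound first for the PWIT---where the second moment closes cleanly because of the constant two-point intensity---and then lifting to the general case via a stochastic-comparison or coupling argument, or (b) by establishing a universal inequality of the form $\sum_{i\ne j}\P(X_i\in A, X_j\in B)\le C\,|A|\,|B|$ valid for any joint law with the correct marginals, sufficient to close the second-moment estimate. Once $\P\{N\ge 1\}\ge c>0$ is in hand, amplifying to $\P\{M_n\le t_n\}\ge 1/2$ is standard: restart from a constant generation $k_0$, extract $\Omega(1)$ independent subtrees via the first-moment identity, and apply the weak bound to each. Establishing the universality-in-joint-law of the two-point estimate (or effecting the reduction to PWIT) is the chief technical obstacle.
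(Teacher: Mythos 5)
Your lower bound (exponential tilting at $\lambda = e$ applied to the first-moment integral, followed by ballot/bridge estimates under a logarithmically curved barrier) is a legitimate and arguably cleaner alternative to the paper's route. The paper never tilts; it instead decomposes $T_n$ combinatorially by $h(v)=\sum v_i$ (writing $T_{n,k}$ for nodes with $h(v)=n+k$, so that $S(v)\sim\mathrm{Gamma}(n+k)$), works out that only $k=n/e+O(\sqrt{n\log n})$ matters, and proves ballot-type lemmas directly for the non-i.i.d.\ ``randomly sampled random walk'' $(W_1(\bv_{n,k}),\ldots,W_n(\bv_{n,k}))$ via cyclic exchangeability (Proposition~\ref{leading}) and a conditional-independence device (Fact~\ref{condfact}). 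Your tilt collapses the intensity to i.i.d.\ $\mathrm{Exp}(e)$ at the first-moment level, which sidesteps all of that, at the cost of having to verify the barrier/bridge estimates carefully for the tilted intensity. You should also be aware that the paper additionally restricts to paths whose genealogy indices $h_j(v)$ grow linearly (the event $D_a(v)$ is excluded); this plays no role in the lower bound but becomes essential in the second moment, and your plan does not mention it.

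The upper bound is where the real gap is. You correctly identify the obstacle: the pair intensity $\sum_{i\ne j}\P(X_i\in dx,\,X_j\in dy)$ depends on the joint law of $\mathbf X$, and the first-moment identity gives no control over it. But neither of your two proposed fixes works. Fix (b) is \emph{false}: the joint law is completely unconstrained beyond the marginals, so one may take a comonotone coupling $X_j = F_j^{-1}(F_i(X_i))$ (each $F_m$ the $\mathrm{Gamma}(m)$ c.d.f.), for which $\P(X_i\in A,\,X_j\in B)\asymp\min(\P(X_i\in A),\P(X_j\in B))$ when $B$ is the image of $A$; summing over $i\ne j$ this is $\asymp |A|$, which for short $B$ is vastly larger than $|A|\,|B|$. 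Fix (a) also lacks a mechanism: since $\E[N]$ is the same for every joint law (your own first-moment identity), while $\E[N^2]$ can differ, Paley--Zygmund gives no uniform control, and there is no evident stochastic comparison between the minimum $M_n$ for the PWIT and for an arbitrary exponential-steps displacement vector.

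The paper resolves this without any control of the pair intensity, via a trick you are missing. In the second-moment sum over pairs $(v,v')$ that split at generation $j$ at vertex $w$, with $x$ the child of $w$ ancestral to $v'$, the single ``sibling step'' $\Delta=S(x)-S(w)=X^w_{v'_{j+1}}$ is the only place the joint law of $\mathbf X$ enters. The paper \emph{does not try to estimate the probability of $\Delta$}. It conditions on $v\in\mathcal X$ and on the approximate value $b$ of $W_j(v)-(j/n)m_n$, then \emph{union-bounds over the integer part of $\Delta$} and over the child index $l=k_1$ (the latter controlled to $O(a\cdot j)$ because $D_a(v)$ is excluded). For each fixed $(\Delta,l,k_2)$, the sub-BRW emanating from $x$ is generated independently of $v$'s path and of $\Delta$ by branching independence, so the conditional expected count is simply an unconditional expected count of near-leading nodes below a threshold in a fresh tree. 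The resulting sum over $\Delta$ converges because that expected count decays like $e^{-e(b+\Delta)}$ (Lemmas~\ref{ETnx} and~\ref{nearleading}). This is the step that makes the argument uniform over all joint laws with the prescribed marginals, and it is the missing idea in your proposal. Finally, the amplification from $\P\{M_n\le m_n\}\ge\eps$ to a median bound in the paper is not via restarting from a constant generation and independent subtrees, but via the bootstrapping Lemmas~\ref{tight1}--\ref{tight2}; your ``restart from generation $k_0$'' idea should work as well but is not what the paper does.
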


\begin{remark}
The $O(1)$ term is uniform over $n$ and over all
BRW for
which $\mathbf{X}$ has exponential steps.
\end{remark}

\begin{remark}
Independently of the current work, \'Elie
A\"id\'ekon~\cite{aidekonpaper} has recently proved, for a quite general
family of random walks (including those considered in this paper), that
$M_n - \tM_n$ converges in distribution to a random variable $M^*$, and
describes the distribution of $M^*$ in terms of a functional of the
limit of the derivative martingale associated to the branching random walk.
\end{remark}

Using methods from~\cite{ford09pratt}, we can deduce from Theorem~\ref{T1}
uniform exponential tails for $M_n$.
In the next theorem and at other points throughout the paper, we will
use the Vinogradov notation $f \ll g$ which means $f=O(g)$, with
subscripts indicating
dependence on any parameter, for example, $f\ll_k g$ means the constant
implied by
the $\ll$ symbol may depend on $k$ but not on any other variable.

\begin{theorem}\label{TFord}
If $\mathbf{X}$ has exponential steps, then
for any $c_1<\er$, we have
\[
\P\{M_n \le\tM_n -x\} \ll_{c_1} \er^{-c_1 x}\qquad (n\ge1, x\ge0)
\]
and for any $c_2 < 1$,
\[
\P\{ M_n \ge\tM_n + x \} \ll_{c_2} \er^{-c_2 x}\qquad
(n\ge1, x\ge0).
\]
\end{theorem}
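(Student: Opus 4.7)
The proof separates into the lower and upper tails, with Theorem \ref{T1} as the main input in both.

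\textbf{Lower tail.} The plan is to apply the first moment method. A direct calculation shows that for exponential steps, the expected intensity of children of any given particle in the interval $[0,y]$ equals $y$:
\[
\sum_i \P\{X_i \le y\} = \int_0^y \er^{-s} \sum_{i \ge 1} \frac{s^{i-1}}{(i-1)!}\,ds = y.
\]
Iterating this across $n$ generations yields $\E|\{v \in T_n : S(v) \le y\}| = y^n/n!$, so by Markov's inequality $\P\{M_n \le y\} \le y^n/n!$. Substituting $y = \tM_n - x$, invoking Theorem \ref{T1} for the asymptotic $\tM_n = n/\er + (3/(2\er))\log n + O(1)$, and applying Stirling's formula yields $\P\{M_n \le \tM_n - x\} \ll \sqrt{n}\,\er^{-\er x}$ uniformly in $n, x \ge 0$. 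For any $c_1 < \er$, the extra $\sqrt n$ factor is absorbed into $\er^{(\er - c_1)x}$ whenever $x \ge x_0(n) := (\log n)/(2(\er - c_1))$. For smaller $x$, the monotonicity of $x \mapsto \P\{M_n \le \tM_n - x\}$ combined with the first moment bound evaluated at the breakpoint $x_0(n)$ gives $\P\{M_n \le \tM_n - x\} \ll n^{-c_1/(2(\er - c_1))} \le \er^{-c_1 x}$, since $\er^{-c_1 x} \ge \er^{-c_1 x_0(n)} = n^{-c_1/(2(\er-c_1))}$ throughout this range.

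\textbf{Upper tail.} This is the more delicate part and follows the iterative bootstrap of \cite{ford09pratt}. Write $q_n(x) := \P\{M_n > \tM_n + x\}$; Theorem \ref{T1} provides the base estimate $q_n(0) \le 1/2$ and the uniform bound $\delta_n := \tM_n - \tM_{n-1} = 1/\er + O(1)$. For a parameter $N$ to be chosen depending on $x$ and the target rate $c_2$, a Chernoff bound on $X_N \sim \text{Gamma}(N,1)$ gives $\P\{X_N > 2N\} \ll (2/\er)^N$. Conditioning on the existence of $N$ children with displacement at most $2N$ (an event whose failure probability is controlled via the intensity identity above together with the analysis from the proof of Theorem \ref{T1}) and exploiting the independence of subtrees rooted at those children yields a recursion
\[
q_n(x) \le q_{n-1}\bigl(x + \delta_n - 2N\bigr)^{N} + O\bigl((2/\er)^N\bigr).
\]
Iterating with $N = N(x)$ chosen to balance the two error terms produces the claimed bound $q_n(x) \ll_{c_2} \er^{-c_2 x}$ for any $c_2 < 1$, where the endpoint rate $1$ reflects the exponential decay of the immediate upper tail of a single Exp$(1)$ displacement.

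\textbf{Main obstacle.} The upper tail is the technical heart of the argument. Two issues must be addressed in tandem: (i) calibrating $N = N(x)$ so the iterated recursion actually produces exponential decay at any rate $c_2 < 1$ rather than plateauing at a fixed small constant; and (ii) showing uniformly in $n$ that many children at level one lie in a bounded initial window, which for a general exponential-steps BRW (beyond the special PWIT structure) requires more than the intensity identity alone and relies on the more detailed estimates developed in the proof of Theorem \ref{T1}. Both ingredients are present in the scheme of \cite{ford09pratt} and yield the stated exponential tail bounds.
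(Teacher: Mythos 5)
Your lower-tail argument contains a genuine error. With $\tM_n = n/\er + \tfrac{3}{2\er}\log n + O(1)$, Stirling gives $\tM_n^n/n! \asymp n$ rather than $\sqrt n$: indeed $\log(\tM_n^n/n!) = \tfrac32\log n - \tfrac12\log n + O(1) = \log n + O(1)$. So the first-moment bound reads $\P\{M_n \le \tM_n - x\} \ll n\,\er^{-\er x}$, which is vacuous for $x \lesssim \tfrac1\er\log n$. More importantly, your absorption step for small $x$ runs in the wrong direction: $x \mapsto \P\{M_n \le \tM_n - x\}$ is \emph{non-increasing}, so for $x$ below the breakpoint $x_0(n)$ monotonicity gives a \emph{lower} bound by the value at $x_0(n)$, not the upper bound you assert. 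In fact the probability is of order $1/2$ at $x=0$, far larger than $n^{-c_1/(2(\er - c_1))}$, and the first-moment method gives no control at all over the window $1 \le x \lesssim \tfrac1{\er}\log n$. This route therefore cannot deliver a uniform rate $c_1$ approaching $\er$. The paper avoids this gap by an entirely different mechanism: it combines Biggins's Chernoff-type theorem for BRW (yielding $\P\{|T_r(ar)| \le (a\er - \eta)^r\} \le 1/5$ for large $r$) with the branching-amplification Lemma~\ref{tight2} and the increment $\tM_{n+r} - \tM_n \ge (1/\er-\eta)r$ from Theorem~\ref{T1}, which gives $\P\{M_n \le \tM_n - (a - 1/\er + \eta)r\} \le (a\er-\eta)^{-r}$ and hence the optimal rate in the limit $\eta\to 0$, $a\to 1/\er$.

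For the upper tail your plan is in the right spirit — produce many nodes with small displacement and exploit subtree independence — but the paper uses a one-shot argument, not an iterated recursion: with $s = \lceil\log(2r)\rceil$, it shows via a Gamma/Poisson tail bound on $T_1$ together with Biggins's theorem inside each first-generation subtree that, with probability $\ge 1 - \er^{-(1-\eps/2)r}$, at least $4^s \ge 2r$ nodes of $T_{s+1}$ have displacement $< r$; conditionally, the event $M_n > \tM_{n-(s+1)} + r$ forces all $\ge 2r$ of these independent subtrees to beat their median, which costs a factor $2^{-2r}$. Your proposed recursion $q_n(x) \le q_{n-1}(x + \delta_n - 2N)^N + O((2/\er)^N)$ is delicate to close: the argument of $q_{n-1}$ is \emph{smaller} than $x$ when $N \ge 1$, and the inductive constant gets raised to the $N$th power, so the calibration of $N$ against $x$ is the whole ballgame. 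As you yourself acknowledge, you have not carried it out, so this half of the proposal is a sketch rather than a proof.
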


Again, the above estimates are uniform over all BRW under
consideration. Also, Theorem~\ref{TFord} implies that $\tM_n = \E
{M_n}+O(1)$, and so both Theorems~\ref{T1} and~\ref{TFord} hold with
$\tM_n$ replaced by $\E{M_n}$.

The simplest example of a displacement vector with exponential steps is
obtained by taking $\mathbf{X}=(E_1,E_1+E_2,\ldots)$,
where $\{E_i\}_{i \in\bbN}$ are i.i.d.\break $\operatorname
{Exponential}(1)$ random variables.
In this case $\mathcal{T}$ is called the \textit{Poisson-weighted infinite
tree}~\cite{aldous2004omp}
and has been used very effectively in probabilistic combinatorial
optimization. It also arises in the analysis of an important tree-based
data structure in the following way.
Order the elements of $\mathcal{T}$ in increasing order of displacement
as $\{w_i\}_{i \in\bbN}$, so, in particular, we have $w_1=\varnothing,
w_2=1 \in\bbN^1$,
and either $w_3=2 \in\bbN^1$ or $w_3=11 \in\bbN^2$. Now for each $m$
let $Z_m$ be the subtree of $\mathcal{T}$ induced by $w_1,\ldots,w_{m}$.
By the memoryless property of the exponential, it follows that the
parent of $w_{m+1}$ is a uniformly random element of $Z_m$---in other words, $Z_m$ is a
\textit{random recursive tree} for all $m$.
This connection is well known~\cite{pittel94recursive}.

$Z_m$ is also the subtree of $\mathcal{T}$ induced by the set of nodes
of displacement at most $S(w_m)$. [Also, it is straightforwardly shown by
induction and the memoryless property of the exponential that the
families $(Z_m)_{m \in\bbN}$ and $(S(w_m))_{m \in\bbN}$ are independent,
but we will not need this.] Let $H_m$ be the \textit{height} of
$Z_m$---the largest generation containing a node of $Z_m$. In other words,
$H_m=\max\{n\dvtx M_n \leq S(w_m)\}$, which is the representation
that will
be useful below.
Devroye~\cite{devroye87branching} showed that $H_m/\log m \to e$ almost
surely and in expectation, and Pittel~\cite{pittel94recursive} provided
a different proof of the almost sure convergence. As a straightforward
consequence of Theorems~\ref{T1} and~\ref{TFord}, we obtain the
following more precise information.
\begin{corollary}\label{cor1.3}
The height $H_m$ of a random recursive tree on $m$ nodes satisfies
$\E{H_m} = e\log m - \frac{3}{2}\log\log m + O(1)$. Furthermore, for
all $c'<\frac1{2e}$, all $m \geq1$, $k \geq1$,
\[
\P\{ |H_m-\E{H_m}| \geq k \} \ll_{c'} e^{-c' k}.
\]
\end{corollary}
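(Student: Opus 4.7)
The plan is to derive the corollary from Theorems~\ref{T1} and \ref{TFord} together with the classical description of the waiting times in the growth of $Z_m$. The starting point is the identity $\{H_m \geq n\} = \{M_n \leq S(w_m)\}$ already noted above. To control $S(w_m)$, one exploits the standard observation that, conditional on $Z_{m-1}$, the memoryless property makes $S(w_m)-S(w_{m-1})$ exponential with rate $m-1$; thus $(S(w_m))_{m\geq 1}$ is distributed as the jump times of a rate-$1$ Yule process started from one particle, and in particular $S(w_m) \eqd \sum_{k=1}^{m-1} E_k/k$ for iid $\Exp(1)$ variables $E_k$. The explicit Laplace transform
\[
 \E\bigsqpar{\exp\bigpar{\lambda(S(w_m)-\log m)}}=m^{-\lambda}\prod_{k=1}^{m-1}\frac{1}{1-\lambda/k}\qquad(\lambda<1),
\]
together with $-\log(1-\lambda/k)=\lambda/k+O(\lambda^2/k^2)$, yields $\E S(w_m)=\log m+O(1)$ and, uniformly in $m$, the tail bounds
\[
 \P(S(w_m)\geq \log m+x)\ll_{c_2} e^{-c_2 x}\quad\text{and}\quad\P(S(w_m)\leq \log m-x)\ll_{\Lambda} e^{-\Lambda x},\quad x\geq 0,
\]
valid for every $c_2<1$ and every $\Lambda>0$.

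Next, invert Theorem~\ref{T1} to identify the typical scale of $H_m$: let $n^*=n^*(m)$ be the integer nearest $e\log m-\tfrac{3}{2}\log\log m$. Substituting into $\tM_n=n/e+\tfrac{3}{2e}\log n+O(1)$ and expanding $\log(n^*+k)=1+\log\log m+O(1)$ for $|k|\leq \log m$, one finds $\tM_{n^*+k}=\log m+k/e+O(1)$, so one expects $H_m=n^*+O(1)$.

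To make this rigorous and obtain the tail bound, split the slack evenly: for $k\geq 1$ the event $\{H_m\geq n^*+k\}=\{M_{n^*+k}\leq S(w_m)\}$ is contained in $\{M_{n^*+k}\leq \tM_{n^*+k}-k/(2e)+O(1)\}\cup\{S(w_m)\geq \log m+k/(2e)-O(1)\}$, so Theorem~\ref{TFord} (with any $c_1<e$) together with the upper tail of $S(w_m)$ (with any $c_2<1$) gives
\[
 \P(H_m\geq n^*+k)\ll_{c'} e^{-c' k}
\]
for every $c'<1/(2e)$. The symmetric argument, using the upper tail of $M_n$ in Theorem~\ref{TFord} and the arbitrary-rate lower tail of $S(w_m)$, yields the same bound for $\P(H_m\leq n^*-k)$. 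For large $k$ (say $k\geq \log m$) the expansion of $\tM_{n^*+k}$ breaks down, but here crude monotonicity plus Theorem~\ref{TFord} applied at $n=n^*+k$ with $\tM_n\gg \log m$ easily suffices. Since all bounds are uniform in $m$, integrating yields $|\E H_m-n^*|=O(1)$, giving the stated asymptotic, and the tail bound for $|H_m-\E H_m|$ follows after absorbing the $O(1)$ shift into the implicit constant.

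The only nonroutine point is verifying uniformity in $m$ of the constants in the $S(w_m)$ tail, which reduces to the convergence of $\sum_{k\geq 1}\lambda^2/k^2$ and the telescoping $\sum_{k<m}\lambda/k-\lambda\log m=\lambda\gamma+o(1)$; everything else is bookkeeping. A more careful optimization of the split would in fact yield rate $1/(e+1)$ for the upper tail and $1/e$ for the lower, but the weaker $1/(2e)$ is enough for the corollary and gives a uniform exponent on both sides.
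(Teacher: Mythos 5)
Your proposal is correct and follows essentially the same route as the paper: write $\{H_m\geq n\}=\{M_n\leq S(w_m)\}$, bound the tails of $S(w_m)$ via its representation as a sum of independent exponentials of rates $1,\dots,m-1$, and split the overshoot $k$ evenly between a Theorem~\ref{TFord} deviation of $M_n$ and a deviation of $S(w_m)$. The only cosmetic differences are that the paper derives the $S(w_m)$ tails from the maximum-of-iid-exponentials representation (yielding in fact a doubly exponential lower tail, though this extra strength is not used) rather than a Laplace transform, and it centers at $d(m)=\max\{n:\tM_n\leq\mathrm{har}(m-1)\}$ rather than at the nearest integer to $e\log m-\tfrac32\log\log m$.
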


Since the proof of this corollary is very short, we include it in the
\hyperref[S:intro]{Introduc-} \hyperref[S:intro]{tion}.
In the proof we write $\operatorname{har}(s) = \sum_{i=1}^s 1/i$.
\begin{pf*}{Proof of Corollary \protect\ref{cor1.3}}
The random variable $S(w_m)$ is distributed as the sum, $F_1+\cdots
+F_{m-1}$, of
independent random variables with $F_i$ having $\operatorname
{Exponential}(i)$ distribution for $i=1,\ldots,m-1$. Equivalently,
$S(w_m)$ is distributed as the maximum of $m-1$ i.i.d. $\operatorname
{Exponential}(1)$
random variables. Thus,
$\E{S(w_m)}=\operatorname{har}(m-1)$ and for all $x > 0$,
\begin{eqnarray}\qquad
\P\{ S(w_m) \geq\operatorname{har}(m-1)+x \} & \leq&(m-1)
e^{-(\operatorname
{har}(m-1)+x)} \leq e^{-x}, \label{eq:max_upper}\\
\P\{ S(w_m) \leq\operatorname{har}(m-1)-x \} & =& \bigl(
1-e^{-(\operatorname {har}(m-1)-x)} \bigr)^{m-1}
\leq e^{-e^{x-1}}.\label{eq:max_lower}
\end{eqnarray}
Now write
\[
d(m)=\max\{n\dvtx \tM_n \leq\operatorname{har}(m-1)\} = e\log m -
\tfrac{3}{2}
\log\log m + O(1)
\]
and note that $\tM_{d(m)}=\operatorname{har}(m-1)+O(1)$ by Theorem
\ref{T1}.
It follows that for $k\geq1$, if $H_m \geq d(m)+k$, then either
\[
M_{d(m)+k} \leq\operatorname{har}(m-1)+\frac{k}{2e} \leq\tM
_{d(m)+k}-\frac
{k}{2e}+O(1),
\]
or
\[
S(w_m) \geq\operatorname{har}(m-1)+\frac{k}{2e}.
\]
By Theorem~\ref{TFord} and (\ref{eq:max_upper}), it follows that $\P
\{ H_m \geq d(m)+k \} \ll_{c_1} e^{-c_1 k/(2\er) }$ for each
$c_1 <\er$. A similar argument using Theorem~\ref{TFord} and (\ref
{eq:max_lower}) shows the bound $\P\{ H_m \leq d(m)-k \} \ll_{c_2} e^{-c_2
k/(2\er)}$ for each $c_2 < 1$.
\end{pf*}

Another important example of a displacement vector with exponential
steps arises
from a discrete time random fragmentation process.
Let $U_1,U_2,\ldots$ be independent uniform $[0,1]$ random variables.
Set $G_1=U_1$ and for $i > 1$ set $G_i = (1-U_1)\cdot\,\cdots\,\cdot
(1-U_{i-1})U_i$. The distribution of the sequence
\[
{\mathbf G} = (G_1,G_2,\ldots)
\]
was first studied, in greater generality, in~\cite{halmos_alms}. (One
motivation for Halmos' paper was a problem about
loss of energy of neutrons after many collisions; after each collision
the neutron loses a random fraction of its current energy.)
${\mathbf G}$~is also a special case of the
\textit{Griffiths--Engen--McCloskey} $\gem$ distribution.
Further, $(G_{\sigma(1)},G_{\sigma(2)},\ldots)$ has the
Poisson--Dirichlet (or $\pd$) distribution,
where $\sigma\dvtx\bbN\to\bbN$ is the permutation that arranges
the terms
of $(G_1,G_2,\ldots)$ in decreasing order.
(We remark that both the $\gem$ and the $\pd$ distributions as defined
above are in fact special cases from a more general two-parameter
family of distributions~\cite{pitman2002csp}---in the standard notation, we are considering
the $\gem(0,1)$ and $\pd(0,1)$ distributions.)
The PD distribution arises in a number of natural decomposition
situations, such
as factorization of large random integers \cite
{Bil72,donnelly1993asymptotic} and cycle lengths of
random permutations~\cite{pitman2002csp}.

Letting $X_k=-\log G_k$ for each $k$ yields a vector $(X_1,X_2,\ldots
)$ with
exponential steps. We refer to the resulting branching random walk as a
\textit{Poisson--Dirichlet branching random walk}.
This example has more complicated dependence between the $X_i$ than the
first example.
Since $\sum_{i=1}^{\infty}G_i=1$ almost surely, there is another way to
think of the branching random walk.
Imagine that an object of mass $m$ is placed at the root $\varnothing$.
The root divides this mass
into pieces according to the vector ${\mathbf G}^{\varnothing}$ and sends
the pieces to its children, sending a mass $mG_k^{\varnothing}$ to
its $k$th child. This rule is repeated recursively, so each node $v$
sends proportion $G_k^{v}$ of the mass it receives to its $k$th child $vk$.
This structure is variously called a \textit{multiplicative cascade} or,
more commonly at the moment, a \textit{fragmentation process} \cite
{bertoin2006rfa}.
The special case of Theorem~\ref{T1} when $\mathcal{T}$ is a
Poisson--Dirichlet branching random walk is used in~\cite{ford09pratt}
to analyze a
tree model related to primality testing, proving heuristic evidence for
the behavior of the distribution of tree heights. In this special case of
a PD branching random walk, a much stronger estimate for the right tail of
$M_n$ was proved in~\cite{ford09pratt}, namely, for any $c_3<1$,
\[
\P\{ M_n \ge\tM_n + x \} \le\exp\{ - e^{c_3 x - c_4} \}\qquad
(n\ge1, x\ge0),
\]
where $c_4$ is a constant depending on $c_3$. Such a right tail bound
cannot hold in general; for example,
for the case of $\mathcal{T}$ being a Poisson-weighted infinite tree,
we have $\P\{ M_1\ge x\} = e^{-x}$.
(It seems likely that among branching random walks with exponential
steps, the Poisson-weighted infinite tree and the Poisson--Dirichlet
branching random walk are extremal examples, with the former having the
heaviest tails for $M_n-\tM_n$ and the latter the strongest tail bounds
for $M_n-\tM_n$. However, we do not have a precise conjecture in this
direction.)

The \textit{Pratt tree} for a prime $p$ has root $p$ whose children are
the prime factors of $p-1$; the subtrees of the children of the
root are recursively constructed in the same fashion (stopping when
$p=2$). We let $H(p)$ be the height of the Pratt tree for~$p$.
It is easily seen that the height is always at most $(\log p)/(\log2)
+ 1$.
Such trees were used by Pratt~\cite{pratt75prime} to show that
if $p$ is prime, then there exists a certificate (formal proof) of the
primality of $p$, of length $O(H(p)\log p)=O((\log p)^2)$.
It is then of interest to understand the ``typical'' behavior of
$H(p)$.~\cite{ford09pratt} uses Theorems~\ref{T1} and~\ref{TFord} to
support the following conjecture.

\begin{conj}[(\cite{ford09pratt}, Conjecture 3)]
There exist constants $c,c'>0$ and real numbers $\{E(p)\dvtx p
\operatorname
{prime}\}$ such that
\begin{itemize}
\item$H(p)= e\log p - \frac{3}{2} \log\log p + E(p)$,
\item for all $z \geq0$, and $x \geq0$,
\[
e^{-c' z} \pi(x) \ll|\{\operatorname{primes} p \leq x\dvtx E(p)
\geq z\}| \ll
e^{-cz} \pi(x)
\]
and
\[
|\{\operatorname{primes} p \leq x\dvtx E(p) \leq-z\}| \ll\exp
(-e^{cx})\pi(x).
\]
\end{itemize}
Here $\pi(x)$ is the number of primes which are at most $x$.
\end{conj}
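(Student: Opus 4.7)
The strategy is to model the Pratt tree of a prime $p$ as a Poisson--Dirichlet branching random walk and then transfer the sharp estimates of Theorems~\ref{T1} and~\ref{TFord}.

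The first step is to recast $H(p)$ in the BRW framework. For each node $v$ in the Pratt tree $T_p$ corresponding to a prime $q_v$, set $S(v) = \log(p/q_v)$, so that $S(\emptyset)=0$, the displacement from $v$ to its child $vi$ is $\log(q_v/q_{vi})$, and
\[
H(p) = \max\set{n:\min_{v \in L_n(T_p)} S(v) \leq \log p},
\]
where $L_n(T_p)$ denotes the $n$-th level. Theorem~\ref{T1} gives $\tM_n = n/\er + (3/2\er)\log n + O(1)$, so inverting the asymptotic at the threshold $\log p$ yields $n \approx \er\log p - (3/2)\log\log p$, matching the conjectured first-order behavior.

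The second (and hardest) step is to show that for a density-one set of primes $p\le x$, the collection of positions $\set{S(v)}_{v \in L_n(T_p)}$ is well approximated in distribution by the positions at level $n$ of a PD BRW, uniformly in $n$. At the root this amounts to a Billingsley-type statement for the prime factorization of $p-1$: for a uniformly chosen prime $p \le x$, the vector $(\log q_1/\log p, \log q_2/\log p, \ldots)$, with $q_1,q_2,\ldots$ the prime factors of $p-1$ in size-biased order and counted with multiplicity, should converge in distribution to $\gem(0,1)$. Billingsley's theorem gives the analogue for uniformly random integers, but conditioning on primality forces $p-1$ to be even and places the factorization of $p-1$ inside a sparse arithmetic-progression structure; quantifying the PD approximation uniformly requires strong control over counts of primes $p\le x$ with $p-1$ divisible by prescribed moduli $d$ across a wide range of $d$.

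The third step is to iterate this level-$1$ approximation across all $O(\log p)$ generations of $T_p$. Since the subtrees rooted at children of $\emptyset$ are (conditionally on the vector of prime factors of $p-1$) independent Pratt trees of smaller primes, an inductive coupling with a PD BRW can in principle be constructed, but it demands uniformity of the Billingsley-type statement across all primes $p'$ appearing at any level of the tree, together with control of the accumulation of approximation errors over $\Theta(\log\log p)$ scales. Once such a coupling is in place, the median asymptotic in Theorem~\ref{T1} yields the main-term formula $H(p)=\er\log p - \tfrac32\log\log p + E(p)$, and the two exponential tail bounds of Theorem~\ref{TFord} translate (via the same inversion as in the proof of the corollary) into the upper and lower tail statements for $|\{p\le x: E(p)\gtrless \pm z\}|$, once combined with standard statistics of $p-1$ to handle atypical primes.

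The principal obstacle is unambiguously the number-theoretic input in Step~2: establishing a PD law for the large prime factors of $p-1$, uniformly enough in $p$ to permit the iteration in Step~3, appears to lie well beyond current techniques, and even a conditional proof would seem to require strong hypotheses in the direction of Elliott--Halberstam (and plausibly more, since one needs joint control of all prime factors rather than one at a time). This is presumably why the statement is recorded as a conjecture.
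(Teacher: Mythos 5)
The statement you were asked about is not proved in this paper; it is recorded as a conjecture imported from~\cite{ford09pratt}, and the paper supplies only the branching-random-walk heuristic supporting it. You correctly recognize this and correctly identify the core obstacle: one would need a quantitative, uniform Billingsley/Poisson--Dirichlet statement for the large prime factors of $q-1$ that holds simultaneously for all primes $q$ arising in the recursion, which is well beyond current technology even under strong hypotheses of Elliott--Halberstam type. That part of your discussion is sound and matches the stance taken in~\cite{ford09pratt}.

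Your Step~1 change of variables, however, is not the one that produces exponential steps. With $S(v)=\log(p/q_v)$ the displacement from $v$ to its child $vi$ is $\log q_v - \log q_{vi}$; if $\log q_{vi}/\log q_v\approx G_i$ with $\mathbf G\sim\gem(0,1)$, this equals $(\log q_v)(1-G_i)$, which scales with $\log q_v$ and is certainly not distributed as a sum of $i$ independent $\mathrm{Exponential}(1)$ variables. The correct scaling for the PD~BRW is doubly logarithmic: set $S(v)=\log\log p-\log\log q_v$, so that the step from $v$ to $vi$ is $-\log(\log q_{vi}/\log q_v)\approx -\log G_i=X_i$ and the leaf threshold ($q_v=2$) is $\log\log p-\log\log 2$. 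Inverting Theorem~\ref{T1} there gives the heuristic prediction $H(p)=e\log\log p-\tfrac32\log\log\log p+O(1)$, not $e\log p-\tfrac32\log\log p$. Note in fact that the form displayed in the paper cannot be right as written, since $e\log p$ already exceeds the trivial bound $(\log p)/\log 2+1$ quoted immediately before the conjecture, so either this paper or its source contains a misprint (the second displayed tail bound has a similar $x$-versus-$z$ slip); you appear to have inherited that misprint rather than re-deriving the scaling from the BRW model. Aside from this, your account of Steps~2--3 and of how Theorems~\ref{T1} and~\ref{TFord} would translate into the asserted tails is a reasonable explanation of why the statement remains conjectural.
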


The structure of the remainder of the paper is as follows. In Section~\ref{sec2}
we introduce a little additional notation. In Section~\ref{sec3} we use
straightforward calculations to prove weak bounds on the likely value
of $M_n$, and to ``reduce the search space'' of nodes in $T_n$ which
have a chance of attaining the minimal displacement $M_n$. Section~\ref{rsrw}
studies the sample path properties of a uniformly random element of
certain ``homogeneous'' subsets of $T_n$, and forms a key step of the
proof. In Section~\ref{sec5} we prove the lower bound of Theorem~\ref{T1}, and
in Section~\ref{sec6} we prove the upper bound. Finally, the details of the
proof of Theorem~\ref{TFord} are found in
Section~\ref{sec:tails}.

\section{Notation}\label{sec2}

Given $v=v_1v_2,\ldots, v_n \in V$, we let $h(v) = \sum_{i=1}^n v_i$, and
remark that $S(v)$ has distribution
Gamma$(h(v))$. If $v \in T_n$, we write $k(v)=h(v)-n$, and write
$T_{n,k}$ for the set of nodes $v \in T_n$ with $k(v)=k$.
We denote by $T_n(x)$ [resp., $T_{n,k}(x)$] the set of nodes of $T_n$
(resp., $T_{n,k}$) with displacement at most $x$.\vadjust{\goodbreak}

The Bachmann--Landau notation $o(\cdot)$ and $O(\cdot)$ have their usual meaning.
As mentioned earlier, we use the Vinogradov notation $f\ll g$ which
means $f=O(g)$.
We also use the Hardy notation $f \asymp g$ which means $f=O(g)$ and $g=O(f)$.
Constants implied by these symbols are absolute unless otherwise
indicated, for example, by
a subscript.

%
\section{Some basic expectations}\label{sec3}
%

In order to restrict the set of nodes, we need to consider when
searching for the precise location of $M_n$, we first
assert the following two straightforward facts, whose proofs are forthcoming.

\begin{lemma}\label{lemma_ab}
\textup{(a)} The expected number of nodes $v \in T_n$ with $|h(v) - (1+1/e)n|
\le\sqrt{n}$ and with $S(v) \leq n/e+\log n/(2e)$ is $\gg1$.\vspace*{-6pt}
\begin{longlist}
\item[(b)] The expected number of nodes $v \in T_n$ with $S(v) \leq n/e +
(2/e)\log n$ and with $|h(v)-(1+1/e)n| > \sqrt{6n \log n}$ is $O(n^{-1/2})$.
\end{longlist}
\end{lemma}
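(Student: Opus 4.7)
The starting point is an exact formula. There are $\binom{n+k-1}{n-1}$ nodes $v\in T_n$ with $h(v)=n+k$ (the number of compositions of $n+k$ into $n$ positive parts), and for any one of them $S(v)$ has distribution $\mathrm{Gamma}(n+k)$, so the expected number of such nodes with $S(v)\le x$ is
\[
E_{n,k}(x):=\binom{n+k-1}{n-1}\,\P\{\mathrm{Gamma}(n+k)\le x\}.
\]
Both parts of the lemma reduce to summing $E_{n,k}(x)$ over an appropriate range of $k$.

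Writing $m=n+k$, the plan is to apply Stirling and the standard lower-tail asymptotic for the Gamma distribution: when $k,n\to\infty$ with $k\asymp n$ and $1-x/m$ bounded below,
\[
\binom{n+k-1}{n-1}\asymp\sqrt{\tfrac{n}{mk}}\,\tfrac{m^m}{n^n k^k},\qquad
\P\{\mathrm{Gamma}(m)\le x\}\asymp\tfrac{(xe/m)^m e^{-x}}{\sqrt m\,(1-x/m)},
\]
so that $E_{n,k}(x)\asymp n^{-1}(xe/n)^n(xe/k)^k e^{-x}$. Setting $x=n/e+u$ and $k=n/e+\delta$, a Taylor expansion about the critical point $(n/e,n/e)$ (where $\partial_k$ vanishes and $\partial_x$ is $e$) yields
\[
\Bigl(\tfrac{xe}{n}\Bigr)^{\!n}\Bigl(\tfrac{xe}{k}\Bigr)^{\!k}e^{-x}=\exp\Bigl(eu-\tfrac{e(\delta-u)^2}{2n}+O\bigl(u^2/n+(|u|+|\delta|)^3/n^2\bigr)\Bigr),
\]
so $E_{n,k}(x)$ is essentially $n^{-1}e^{eu}$ times a Gaussian factor in $\delta-u$ of width $\sqrt{n/e}$.

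For (a) take $u=\log n/(2e)$, so $e^{eu}=\sqrt n$ and $E_{n,k}(x)\asymp n^{-1/2}\exp(-e(\delta-u)^2/(2n))$ uniformly for $|\delta|\le\sqrt n$; summing over the $\gg\sqrt n$ integer $\delta$ with $|\delta-u|\le\sqrt{n/e}$ (on which the Gaussian factor is bounded below) gives $\gg 1$. For (b) take $u=(2/e)\log n$, so $e^{eu}=n^2$ and $E_{n,k}(x)\asymp n\exp(-e(\delta-u)^2/(2n))$; for $|\delta|>\sqrt{6n\log n}$ and $u=o(\sqrt{n\log n})$ the exponent is at most $-3e\log n\cdot(1-o(1))$, each term is $\ll n^{1-3e+o(1)}$, and a Gaussian-tail estimate for the sum is $\ll n^{-1/2}$ (with much to spare, since $3e\approx 8.15$).

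The principal technical obstacle is uniformity in $k$: the combined asymptotic is valid only in the bulk regime $k\asymp n$ with $|u|,|\delta|=o(n)$, and values of $k$ outside this window must be treated separately. For $k$ very small (e.g.\ $k=O(1)$) the probability $\P\{\mathrm{Gamma}(n+k)\le x\}$ is exponentially small in $n$ via the crude Chernoff bound $(xe/m)^m e^{-x}$ because $x/m$ stays bounded below $1$, while for $k\gg n$ the binomial grows only polynomially but $(xe/m)^m$ decays super-polynomially. Both edges lie well outside the windows in (a) and (b), so they contribute negligibly to the required sums.
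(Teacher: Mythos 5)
Your computation is a direct Stirling-plus-Laplace attack on
\[
E_{n,k}(x)=\binom{n+k-1}{n-1}\,\P\{\mathrm{Gamma}(n+k)\le x\},
\]
which is a genuinely different route from the paper. The paper instead exploits the algebraic identity $f_{n,k}(x)=\frac{x^k}{k!}\cdot\frac{x^{n-1}e^{-x}}{(n-1)!}$ so that (i) the density for $S(v)$ over all of $T_n$ is exactly a Gamma density (giving the clean formula $\E|T_n(x)|=x^n/n!$ of Lemma~\ref{ETnx}), and (ii) the sum over $k$ of $f_{n,k}(x)$ becomes, up to a factor of $e^{x}$, a Poisson mass function in $k$, so the restriction to $|k-x|\ge t\sqrt{x}$ is handled uniformly and effortlessly by the Poisson tail bound (Proposition~\ref{poisson}). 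Your approach buys a more explicit local picture of $E_{n,k}(x)$ (the Gaussian-in-$\delta$ form around the critical point), but at the cost of the uniformity in $k$ that the Poisson factorization gives for free. Your part (a), in which you restrict to the bulk window $|\delta|\le\sqrt{n}$, is fine: there $u=O(\log n)$, the cubic error term is $O(n^{-1/2})=o(1)$, and the lower bound $\gg 1$ follows.

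There is a gap in part (b). You must sum over all $k\ge 0$ with $|k-n/e|>\sqrt{6n\log n}$, and your Gaussian form only holds when the error $O\bigl(u^2/n+(|u|+|\delta|)^3/n^2\bigr)$ is $o(1)$, that is, $|\delta|\ll n^{2/3}$. The sum extends well beyond that range, so the edges must be handled separately, and your sketch is both incomplete and partly wrong: the claim that for $k\gg n$ ``the binomial grows only polynomially'' is false. For $k\ge n$ (say), $\binom{n+k-1}{n-1}\asymp\sqrt{n/(mk)}\,m^m/(n^nk^k)$, which is $\exp(\Theta(n))$ for $k\asymp n$ and super-polynomial in $n$ more generally; it is only polynomial in $k$ for \emph{fixed} $n$, which is not the regime in play. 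The underlying intuition is right — the Gamma-tail factor $(xe/m)^m e^{-x}$ decays fast enough to beat the binomial — but a correct argument needs either a log-concavity/monotonicity statement for $k\mapsto E_{n,k}(x_0)$ to propagate the boundary decay outward, or a direct Chernoff computation on the product showing it is $e^{-\Omega(n)}$ for all $k$ outside, say, $[n/(2e),4n]$. The paper's proof avoids the whole issue: $\sum_k f_{n,k}(x)$ has $\frac{x^k}{k!}$ as the only $k$-dependence, so Proposition~\ref{poisson} bounds the tail over all of $k\ge 0$ at once, and the remaining $x$-integral and the $x<n/e$ regime are dispatched with Lemma~\ref{ETnx} and Stirling.

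Two small additional remarks. First, in the asymptotic for $\P\{\mathrm{Gamma}(m)\le x\}$ the factor $(1-x/m)$ in the denominator requires $x<m$ and $x/m$ bounded away from $1$; you should note that this holds here since $x\le n/e+O(\log n)$ while $m=n+k\ge n$, so $x/m\le 1/e+o(1)$ uniformly. Second, your bound ``$e^{eu}=n^2$'' for $u=(2/e)\log n$ means the per-$k$ count at $\delta=u$ is $\asymp n$ rather than $O(1)$, which is consistent: the total expected count $\E|T_n(x_0)|$ is $\asymp n^{3/2}$ at $x_0=n/e+(2/e)\log n$, and (b) asserts only that the far-out-$k$ contribution is $O(n^{-1/2})$.
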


Together, (a) and (b) suggest that in order to find $M_n$, it should
suffice to look at nodes in $T_n$ satisfying $h(v)=(1+1/e)n+O(\sqrt
{n})$, as will indeed be the
case. In proving (a) and (b), we will in fact prove more general bounds
that will be useful throughout the paper.

We first remark that for $v \in V$ with $h(v)=h$, $S(v)$ has density function
\[
\gamma_h(x)= \frac{x^{h-1} e^{-x}}{(h-1)!}\qquad
(x\ge0). 
\]
For all $n \geq1$, $k \geq0$, we have
\begin{equation}\label{eq:tnk}
|T_{n,k}| = \pmatrix{{n+k-1}\vspace*{2pt}\cr{k}},
\end{equation}
so the sum of the density functions for nodes $v \in T_{n,k}$ is
\[
f_{n,k}(x) = \pmatrix{{n+k-1}\vspace*{2pt}\cr{k}} \gamma_{n+k}(x) =
\frac{x^{n+k-1}e^{-x}}{k!(n-1)!} = \frac{x^k}{k!} \cdot\frac
{x^{n-1}e^{-x}}{(n-1)!}.
\]
This function will play a significant role, and we now derive bounds on
its value for a variety of ranges of $k$ and $x$.
We remark that assertions (a) and (b), above, state, in particular,
that to find $M_n$ we should take both $k$ and $x$ near $n/e$. Thus,
writing $k=(n+r)/e$ and $x=(n+y)/e$,
by Stirling's formula, we have
\begin{eqnarray}\label{eq:fnkapprox}
f_{n,k}(x) &=& \frac{(1+O({1}/{n}+{1}/{k}))}{n+y} \sqrt{\frac
{n}{n+r}} e^{(r-y)/e} \biggl( 1-\frac{r-y}{n+r} \biggr)^{(n+r)/e}
\nonumber
\\[-8pt]
\\[-8pt]
\nonumber
&&{}\times  \biggl( 1+\frac y n
\biggr)^{n} \frac{e^{3/2}}{2\pi}.
\end{eqnarray}
When $r=O(\sqrt{n})$, $y=O(\sqrt{n})$, we have $(1+y/n)^{n} \asymp
e^{y}$ and
\[
\bigl(1-(r-y)/(n+r)\bigr)^{(n+r)/e} \asymp e^{-(r-y)/e}
\]
and so obtain the simpler approximation
\[
f_{n,k}(x) \asymp\frac{e^{y}}{n}.
\]
Consequently,
\begin{equation} \label{eq:efnkapprox}
\E{\biggl|\biggl\{v \in T_{n,k} \dvtx S(v) \leq\biggl(n+\frac12\log n\biggr)\big/e\biggr\}\biggr|} \asymp
\int
_{0}^{(\log n)/2} \frac{e^y}{n} \asymp n^{-1/2}
\end{equation}
for any fixed $k=n/e+O(\sqrt{n})$---where the constants implicit in
$O(\sqrt{n})$ and in~(\ref{eq:efnkapprox})
may depend on each other---and so we obtain
\[
\E\biggl|\biggl\{v \in T_{n,k} \dvtx S(v) \leq\biggl(n+\frac12\log n\biggr)\Big/e,|k-n/e|\leq
\sqrt
{n}\biggr\}\biggr|
\gg1.
\]
This justifies claim (a) of Lemma~\ref{lemma_ab}, and we now turn to
Lemma~\ref{lemma_ab}(b).
The next lemma is \cite[Lemma 5.1]{ford09pratt}, and we give a
different proof below.

\begin{lemma}\label{ETnx}
For all $n$ and $x\ge0$,
\[
\E|T_{n}(x)| = \frac{x^n}{n!}.
\]
\end{lemma}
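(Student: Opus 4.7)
The plan is to compute the expectation by decomposing $T_n(x)$ according to the statistic $k(v)=h(v)-n$, then summing the already-computed one-variable density $f_{n,k}$ over $k$ and integrating in $t$. This is essentially a geometric series identity packaged inside an expectation.

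First I would write
\[
T_n(x) = \bigsqcup_{k\ge 0} T_{n,k}(x),
\]
so that by linearity of expectation and the definition of $f_{n,k}$ as the sum over $v \in T_{n,k}$ of the density of $S(v)$,
\[
\E{|T_n(x)|} = \sum_{k\ge 0} \E{|T_{n,k}(x)|} = \sum_{k\ge 0} \int_0^x f_{n,k}(t)\,dt.
\]
Since $f_{n,k}$ is nonnegative, Tonelli's theorem lets me interchange sum and integral, and I only need to evaluate $\sum_{k\ge 0} f_{n,k}(t)$ in closed form.

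The key step is the pointwise identity, obtained from the formula $f_{n,k}(t) = \frac{t^k}{k!}\cdot\frac{t^{n-1}e^{-t}}{(n-1)!}$ derived just above the lemma:
\[
\sum_{k\ge 0} f_{n,k}(t) = \frac{t^{n-1}e^{-t}}{(n-1)!} \sum_{k\ge 0} \frac{t^k}{k!} = \frac{t^{n-1}e^{-t}}{(n-1)!}\cdot e^{t} = \frac{t^{n-1}}{(n-1)!}.
\]
(Intuitively, the factor $e^{-t}$ from the Gamma density cancels the $e^{t}$ coming from the combinatorial explosion of the Ulam--Harris tree, and that cancellation is exactly what makes the statement clean.) Integrating from $0$ to $x$ then gives
\[
\E{|T_n(x)|} = \int_0^x \frac{t^{n-1}}{(n-1)!}\,dt = \frac{x^n}{n!},
\]
which is the claimed identity.

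There is no real obstacle here: the only thing to be a little careful about is the justification for interchanging the infinite sum with the integral, but since $f_{n,k}\ge 0$ this is immediate from Tonelli. An alternative, if one preferred to avoid the direct density computation, would be a short induction on $n$ using the recursion $S(vi) = S(v) + X^v_i$ together with the fact that, conditional on $S(v)=s$, the points $\{S(vi):i\in\bbN\}$ form a Poisson process on $[s,\infty)$ of rate $1$ (this is the memoryless structure underlying the exponential steps); then $\E{|T_{n+1}(x)|} = \int_0^x \E{|T_n(t)|}\,dt$ by Campbell's formula, and the induction closes itself. I would present the density-sum version since it is the most concise and uses only formulas already set up in the paper.
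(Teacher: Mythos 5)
Your proof is correct and follows essentially the same route as the paper: decompose by $k$, reduce to $\sum_{k}\int_0^x f_{n,k}(t)\,dt$, and evaluate. The paper leaves the final evaluation implicit, while you spell out the Tonelli interchange and the cancellation $\sum_k t^k/k! = e^t$ against the $e^{-t}$ in the Gamma density; that is exactly the computation the paper is doing.
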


\begin{pf}
We have
\[
\E|T_n(x)| =\sum_{k\ge0} \sum_{v\in T_{n,k}} \P\{ S(v)\le x \} =
\sum_{k\ge0} \int_0^x f_{n,k}(t) \,dt = \frac{x^n}{n!}.
\]
\upqed\end{pf}

It follows immediately from Lemma~\ref{ETnx} and Stirling's formula
that the median of $M_n$ is $\ge\frac{n}{e}+\frac{1}{2e}\log n + O(1)$.

We next obtain bounds on the probability that $k$ is very different
from $x$ when $x \geq n/(2e)$. First we quote easy bounds for the tails
of the Poisson
distribution.

\begin{proposition}\label{poisson}
If $z>0$ and $0<\alpha\le1\le\b$, then
\[
\sum_{k\le\alpha z} \frac{z^k}{k!}
< \biggl(\frac{e}{\alpha}\biggr)^{\alpha z},\qquad
\sum_{k\ge\b z} \frac{z^k}{k!}
< \biggl(\frac{e}{\beta}\biggr)^{\beta z}.
\]
\end{proposition}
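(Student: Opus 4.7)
The plan is to prove both estimates by the standard Chernoff exponential moment argument, realized here without probability by inserting a monomial factor of the form $c^{k-\lambda z}$ into the sum and then comparing with the full series $e^{cz}$.

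First I would treat the upper tail. Fix a parameter $c \geq 1$; then for every $k \geq \beta z$ the factor $c^{k-\beta z}$ is at least $1$, so
\[
\sum_{k \geq \beta z} \frac{z^k}{k!} \;\leq\; c^{-\beta z}\sum_{k \geq \beta z}\frac{(cz)^k}{k!} \;<\; c^{-\beta z}\,e^{cz},
\]
the second inequality being strict because the truncated series omits at least the positive $k=0$ term of $e^{cz}$. Since the hypothesis $\beta \geq 1$ permits the choice $c = \beta$, and this choice is precisely the Chernoff optimizer for the right-hand side, the bound collapses to $(e/\beta)^{\beta z}$, as required.

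The lower tail is entirely symmetric. I would pick $0 < c \leq 1$, note that $c^{k-\alpha z} \geq 1$ for every integer $k \leq \alpha z$, and run the same manipulation
\[
\sum_{k \leq \alpha z} \frac{z^k}{k!} \;\leq\; c^{-\alpha z}\sum_{k \leq \alpha z}\frac{(cz)^k}{k!} \;<\; c^{-\alpha z}\,e^{cz}.
\]
The assumption $\alpha \leq 1$ then legalizes the optimal choice $c = \alpha$, producing $(e/\alpha)^{\alpha z}$.

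There is no genuine obstacle in this proof; the entire point of the hypotheses $0 < \alpha \leq 1 \leq \beta$ is to arrange that the inserted monomial factor exceeds $1$ on the appropriate side of the sum, so that passing from the truncated series to $e^{cz}$ yields an upper (rather than lower) bound. The strictness in the stated inequality comes for free from the positivity of the terms discarded in that step.
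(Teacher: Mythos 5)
Your proof is correct and is essentially the paper's own argument: the paper directly writes $z^k = (\alpha z)^k(1/\alpha)^k$ and bounds $(1/\alpha)^k \le (1/\alpha)^{\alpha z}$ before comparing with $e^{\alpha z}$, which is exactly your manipulation with the optimizer $c=\alpha$ (respectively $c=\beta$) already substituted. Presenting it with a free parameter $c$ and then optimizing is a harmless cosmetic generalization; the content and the strictness justification are the same.
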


\begin{pf} We have
\[
\sum_{k\le\alpha z} \frac{z^k}{k!} =
\sum_{k\le\alpha z} \frac{(\alpha z)^k}{k!}\biggl (\frac{1}{\alpha}\biggr)^k
\le
\biggl(\frac{1}{\alpha}\biggr)^{\alpha z} \sum_{k \le\alpha z} \frac{(\alpha z)^k}{k!}
< \biggl(\frac{e}{\alpha}\biggr)^{\alpha z}.
\]
The second inequality follows in the same way.\vadjust{\goodbreak}
\end{pf}

An easy corollary is the following.

\begin{lemma}
For $0\le t\le x^{1/6}$,
\[
\sum_{\{k\dvtx |k-x| \geq t \sqrt{x} \}} f_{n,k}(x) \ll
e^{-t^2/2} \frac{x^{n-1}}{(n-1)!}.
\]
\end{lemma}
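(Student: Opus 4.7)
The plan is to recognize the sum as a constant multiple of a Poisson tail probability and then invoke Proposition~\ref{poisson} together with a short Taylor expansion.

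First, redistribute the exponential factor by writing
\[
f_{n,k}(x) = \frac{x^{n-1}}{(n-1)!} \cdot \frac{x^k e^{-x}}{k!},
\]
which lets me pull the $k$-free factor out of the sum. What remains is exactly
\[
\sum_{\{k : |k-x|\ge t\sqrt{x}\}} \frac{x^k e^{-x}}{k!} = \P\{|Y - x|\ge t\sqrt{x}\},
\]
where $Y$ is Poisson with mean $x$. So the lemma reduces to the Chernoff-type bound $\P\{|Y-x|\ge t\sqrt{x}\} \ll e^{-t^2/2}$ in the range $0\le t\le x^{1/6}$.

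For the upper tail, I apply Proposition~\ref{poisson} with $z=x$ and $\beta = 1 + t/\sqrt{x}$; this gives
\[
e^{-x}\sum_{k \ge \beta x}\frac{x^k}{k!} \le e^{x(\beta - 1 - \beta\log\beta)}.
\]
Expanding $(1+u)\log(1+u) = u + u^2/2 - u^3/6 + O(u^4)$ with $u = t/\sqrt{x}$ yields
\[
x(\beta - 1 - \beta\log\beta) = -\tfrac12 t^2 + O\bigl(t^3/\sqrt{x}\bigr).
\]
Under the hypothesis $t \le x^{1/6}$, the error term satisfies $t^3/\sqrt{x} \le 1$, so the whole exponent is at most $-t^2/2 + O(1)$, which gives the desired bound. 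For the lower tail, I apply the same proposition with $\alpha = 1 - t/\sqrt{x}$ (the sum is vacuous if $t > \sqrt{x}$, and $\alpha \geq 0$ is automatic when $t\le x^{1/6}$ for $x$ not tiny; small $x$ is handled by absorbing into the implicit constant). The analogous expansion gives $x(\alpha - 1 - \alpha\log\alpha) = -t^2/2 - O(t^3/\sqrt{x})$, which is even more negative than $-t^2/2 + O(1)$.

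Combining the two tails and reinserting the prefactor $x^{n-1}/(n-1)!$ gives the lemma. The only real subtlety is the cubic error term in the Taylor expansion of $(1+u)\log(1+u)$; the hypothesis $t\le x^{1/6}$ is tailored precisely so that this error is $O(1)$ and can be absorbed into the Vinogradov constant.
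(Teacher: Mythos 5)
Your proposal is correct and is essentially the argument the paper has in mind: the paper states the lemma without proof as an "easy corollary" of Proposition~\ref{poisson}, and your factoring of $f_{n,k}(x)$ into the $k$-free prefactor $x^{n-1}/(n-1)!$ times the Poisson mass $x^k e^{-x}/k!$, followed by applying the proposition with $\beta = 1 + t/\sqrt{x}$ and $\alpha = 1 - t/\sqrt{x}$ and Taylor-expanding the Chernoff exponent, is exactly the intended route. Your observation that the hypothesis $t\le x^{1/6}$ is what keeps the cubic remainder $t^3/\sqrt{x}$ bounded is the right way to see why the stated range appears, and the handling of the small-$x$ and vacuous lower-tail cases via the implicit constant is fine.
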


Taking $t=\lceil\sqrt{5\log n}\rceil$ and integrating the above
bound over
$n/e \leq x \leq n/e+ (2/e)\log n$,
we obtain the bound
\[
\E\biggl|\biggl\{ v \in T_n\dvtx\frac{n}{e} \leq S(v) \leq\frac{n+2\log
n}{e}, |h(v)-S(v)| \geq\sqrt{5n\log n} \biggr\}\biggr| =O\biggl(\frac{1}{n^{1/2}}\biggr).
\]
Since $\sqrt{5n\log n} + (2/e)\log n < \sqrt{6n\log n}$ for $n$ large,
combining the preceding expectation bound with
Lemma~\ref{ETnx} (applied with $x=n/e$) and Stirling's formula, it
follows that
\[
\E\biggl\{ \biggl|\bigcup_{\{k\dvtx|k-n/e| \geq\sqrt{6 n \log n}\}}
T_{n,k}\bigl((n+2\log n)/e\bigr)\biggr| \biggr\} =O\biggl(\frac{1}{n^{1/2}}\biggr),
\]
which establishes Lemma~\ref{lemma_ab}(b).


%
\section{Randomly sampled random walk}\label{rsrw}
%


For integers $n \geq1$, $k \geq0$ and a vertex $v=v_1,\ldots,v_n\in
T_{n,k}$, let $h_i(v)=h(v_1,\ldots, v_i)$ and
$W_i(v)=S(v_1,\ldots, v_i)$ for $1\le i\le n$, and write $\bW
(v)=(W_1(v),\ldots,W_n(v))$. We write $\bW$, $W_i$ and $h_i$ in place
of $\bW(v)$, $W_i(v)$ and $h_i(v)$ when
$v$ is clear from context.
We will always write $\bv_{n,k}$ for a uniformly random element of
$T_{n,k}$, independent of
$\bv_{n',k'}$ for $(n,k) \neq(n',k')$, and write
$\WW_{n,k}$ for the distribution of the sequence $\WW(\bv
_{n,k})=(W_1(\bv_{n,k}),\ldots,W_n(\bv_{n,k}))$.
Although the sequence $0,W_1,\ldots,W_n$ is not a random walk, it is
useful to
think of it as such for the purposes of estimating various probabilities.

Denote by $\HH_{n,k}$ the set of
vectors $(h_1,\ldots,h_n)$ of positive integers with
$0<h_1<\cdots<h_n=n+k$ and note that $|\HH_{n,k}|={{n+k-1}\choose{k}}$.
The sequence $(h_1(\bv_{n,k}),\ldots,h_n(\bv_{n,k}))$ is distributed as
a uniformly random element of~$\HH_{n,k}$.


For $v \in T_n$, let $L_a=L_a(v)$ denote the event $\{W_i \ge(i/n)W_n
- a (i\le n)\}$.
A~vertex $v$ is called \textit{leading} if $L_{0}(v)$ holds,
and---informally---\textit{near-leading} if $L_{a}(v)$ holds for some
small $a$.
[We also will need to consider the event $R_{a}(v)=\{ W_i \le(i/n)W_n
+ a (i\le n)\}$,
and when this event occurs we say $v$ is ``near trailing.'']

If $M_n$ is not much larger than normal, $v$ is the vertex at level $n$
with minimal $S(v)$ and
$W_i\le(i/n)W_n-c$ for a large
$c$, then $M_i$ will be smaller than normal and this is rare. Hence,
with high probability $v$ will be a near-leading vertex.
On the other hand, near-leading vertices are rare---a given vertex in
$T_n$ is
near leading with probability $O(f(a)/n)$ for some function $f$. It
will turn out, as in
prior work~\cite{addario07brw},\vadjust{\goodbreak} that $\E M_n$ is within $O(1)$
of the smallest $x$ such that the expected number of leading nodes with
displacement at most $x$ is at least $1$.

In this section we develop estimates for the probability that vertices
of $T_{n,k}$ are
near leading. As in~\cite{addario07brw}, we also show that for a near-leading
vertex $v$, it is rare for $W_i(v)-(i/n)W_n(v)$ to be small if $i$
is far away from $0$ and far from $n$. This useful fact will play
an important role in the proof of Theorem~\ref{T1}.\looseness=-1

The next proposition, stated without proof, follows from the well-known
fact that
a Poisson sample becomes a uniform sample once conditioned on the
position of the $n$th point.

\begin{proposition}\label{expon}
For any positive real numbers $b_1,\ldots,b_n$ and $B$, and any $v \in T_n$,
\begin{eqnarray}
\P\bigl( W_i\ge b_i (i<n) | W_n=B \bigr) &=& \P\biggl( \frac{W_i}{W_n} \ge\frac
{b_i}{B} (i<n) \biggr), \\[-2pt]
\P\bigl( W_i\le b_i (i<n) | W_n=B \bigr) &= &\P\biggl( \frac{W_i}{W_n} \le\frac
{b_i}{B} (i<n) \biggr).
\end{eqnarray}
\end{proposition}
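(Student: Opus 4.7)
The plan is to realize $\mathbf{W}(v)$ as a subsequence of the partial sums of an iid Exponential$(1)$ sequence, and then invoke the classical fact that a rate-one Poisson process on $[0,B]$, conditioned to have exactly $h-1$ points in $(0,B)$, is distributed as $h-1$ iid uniform points on $[0,B]$.

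First I would fix $v=v_1\cdots v_n\in T_n$ and set $h=h_n(v)$. Because $\mathbf{X}$ has exponential steps, each edge displacement $X^{p(v_1\ldots v_i)}_{v_i}$ is a sum of $v_i$ iid Exponential$(1)$ random variables, and the displacements along distinct edges are independent. Concatenating these one-dimensional building blocks yields iid Exponential$(1)$ variables $E_1,\ldots,E_h$ such that, writing $T_j=E_1+\cdots+E_j$, we have $W_i(v)=T_{h_i}$ for every $i\le n$, and in particular $W_n=T_h$.

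Next I would invoke the standard representation: conditional on $T_h=B$, the vector $(T_1,\ldots,T_{h-1})$ is distributed as $B(U_{(1)},\ldots,U_{(h-1)})$, where $U_{(1)}<\cdots<U_{(h-1)}$ are the order statistics of $h-1$ iid uniform $[0,1]$ variables. Consequently $(T_1/T_h,\ldots,T_{h-1}/T_h)$ is independent of $T_h$, and so the subsampled vector $(W_1/W_n,\ldots,W_{n-1}/W_n)$ is independent of $W_n$. On the event $\{W_n=B\}$, the event $\{W_i\ge b_i\text{ for all }i<n\}$ coincides with $\{W_i/W_n\ge b_i/B\text{ for all }i<n\}$, so
\begin{align*}
\P(W_i\ge b_i \, (i<n) \,|\, W_n=B) &= \P(W_i/W_n \ge b_i/B \, (i<n) \,|\, W_n=B)\\
&= \P(W_i/W_n \ge b_i/B \, (i<n)),
\end{align*}
the last equality being the independence just established. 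The second identity in the proposition follows by the identical argument with every $\ge$ replaced by $\le$. There is no real obstacle here: the whole proposition is a change of coordinates combined with the standard Poisson/uniform representation, and the only thing to verify carefully is that the construction of $\mathcal{T}$ really packages the displacements along the root-to-$v$ path into one iid Exponential$(1)$ sequence of length $h_n(v)$.
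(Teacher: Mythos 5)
Your proof is correct and spells out exactly the argument the paper summarizes in the sentence preceding the proposition: along the root-to-$v$ path the exponential-step structure makes $(W_1,\ldots,W_n)=(T_{h_1},\ldots,T_{h_n})$ a subsequence of Poisson-process arrival times, and the Poisson/uniform order-statistics representation conditional on $T_h=B$ then gives both the independence of $(W_1/W_n,\ldots,W_{n-1}/W_n)$ from $W_n$ and the stated identities.
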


Proposition~\ref{expon} allows us to rescale the values $W_i$ to choose
a convenient value for $W_n$: for given $B'$, letting
$b_i'=b_i\cdot B'/B$, the proposition implies that
\[
\P\{ W_i\ge b_i (i<n) | W_n=B \} = \P\{ W_i\ge b_i' (i<n) |
W_n=B' \}.
\]
We will use this fact rather casually in what follows. We will also use
the following
variant of a well-known fact about cyclically exchangeable sequences.

\begin{proposition}\label{leading}
For any $S>0$,
\[
\P\{ L_{0}(\bv_{n,k}) | W_n=S \} =
\P\{ R_{0}(\bv_{n,k}) | W_n=S \} =\frac1{n}.
\]
\end{proposition}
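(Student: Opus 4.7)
The plan is to derive both equalities from the Dvoretzky--Motzkin cycle lemma, applied to a cyclically exchangeable increment vector. By Proposition~\ref{expon}, the probabilities in question depend on $S$ only through the ratios $W_i/W_n$, so I fix $S$ once and for all and work with the increments $Y_i:=W_i-W_{i-1}$, which sum to $S$.

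The first step is to show that, conditionally on $W_n=S$, the law of $(Y_1,\ldots,Y_n)$ is invariant under cyclic shifts. Two ingredients go into this. First, $\bv_{n,k}$ (identified with $(v_1,\ldots,v_n)$) is uniformly distributed on compositions of $n+k$ into $n$ positive parts, a set closed under cyclic rotation. Second, conditionally on $\bv_{n,k}=(v_1,\ldots,v_n)$ and $W_n=S$, the normalised vector $(Y_1,\ldots,Y_n)/S$ is Dirichlet$(v_1,\ldots,v_n)$, since each $Y_i$ is an independent Gamma$(v_i)$ random variable. Simultaneously cyclically shifting $(v_1,\ldots,v_n)$ and $(Y_1,\ldots,Y_n)$ therefore preserves the joint distribution, and the marginal in the increments is cyclically exchangeable.

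The second step is the cycle-lemma computation. For each $j\in\{0,1,\ldots,n-1\}$, let $L_0^{(j)}$ be the event that the $j$-th cyclic rotation of the increments produces a leading path, i.e.\ $\sum_{\ell=1}^i Y_{((j+\ell-1)\bmod n)+1}\ge (i/n)S$ for every $i\le n$. Cyclic exchangeability gives $\P(L_0^{(j)}\mid W_n=S)=\P(L_0\mid W_n=S)$ for each $j$, so
\[
n\,\P\bigl(L_0(\bv_{n,k})\bigm| W_n=S\bigr)
=\E\biggl[\sum_{j=0}^{n-1}\mathbf{1}_{L_0^{(j)}}\biggm| W_n=S\biggr].
\]
I would then argue that the inner sum equals $1$ almost surely. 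Set $M_i:=W_i-(i/n)S$ for $0\le i\le n$. Since the conditional joint law of $(Y_1,\ldots,Y_n)/S$ is absolutely continuous on the simplex, $M_0,\ldots,M_{n-1}$ are a.s.\ pairwise distinct, so there is a unique minimiser $i^*$. The deterministic cycle lemma then guarantees that exactly one cyclic rotation---the one placing $i^*$ at the origin---satisfies all the leading inequalities, yielding $\P(L_0\mid W_n=S)=1/n$.

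For the event $R_0$, the same argument applies with maxima replacing minima: exactly one cyclic rotation places the unique maximiser of $M_i$ at the origin, giving $\P(R_0\mid W_n=S)=1/n$. Equivalently, reversing the order of the increments converts an $R_0$ event into an $L_0$ event while preserving the joint law (since reversing a uniform composition yields a uniform composition). The only real subtlety in the entire argument is the a.s.\ absence of ties among $M_0,\ldots,M_{n-1}$, and this follows at once from absolute continuity.
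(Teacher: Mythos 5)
Your proof is correct and takes essentially the same route as the paper: both reduce the claim to the Dvoretzky--Motzkin cycle lemma via cyclic exchangeability of the increment sequence. You merely spell out two points the paper leaves implicit (why the shifted sequences share the law $\WW_{n,k}$, via uniformity of compositions plus the Dirichlet conditional structure, and why the minimizer of $W_i-(i/n)W_n$ is a.s.\ unique, via absolute continuity).
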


\begin{pf}
For $0 \leq l < n$, let $W_{n+l}=W_n+W_l$. Then, for each
$0\le l< n$ and all $0 < j \leq n$, let $W_j^{(l)}=W_{j+l}-W_l$.
Then for all $l$, $W_n^{(l)}=W_n$. Furthermore,
each sequence $\bW^{(l)}=(W_1^{(l)},\ldots,W_n^{(l)})$ has
distribution $\WW_{n,k}$ and a.s. exactly one of them is leading by
the Cycle lemma~\cite{dvoretzky47problem}.
Similarly, exactly one of the sequences $\bW^{(l)}$ is ``trailing.''
\end{pf}

The following straightforward fact essentially says that conditioning
on any subset of the differences $h_1-h_0,\ldots,h_n-h_{n-1}$ breaks
the sequence into independent subsequences with distributions from the
same family. The proof is omitted.

\begin{fact}\label{condfact}
Fix integers $n \geq1$, $k \geq0$, and let $(W_1,\ldots,W_n)$ have
law $\WW_{n,k}$.
Then for any integers $1 \leq i \leq m \leq n$, and $1=n_0 < n_1 <
\cdots< n_m = n$,
conditional upon $h_i-h_{i-1}$, the sequence
\[
(W_{n_{i-1}+1}-W_{n_{i-1}},\ldots,W_{n_i}-W_{n_i-1})\vadjust{\goodbreak}
\]
has law $\WW_{n_i-n_{i-1},(h_i-h_{i-1})-(n_i-n_{i-1})}$, and is
mutually independent of $(h_1,\ldots,\break h_n)$,
of $(W_1,\ldots,W_{n_{i-1}})$, and of $(W_{n_i+1}-W_{n_{i}},\ldots
,W_n-W_{n-1})$.
\end{fact}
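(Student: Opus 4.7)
The plan is to exhibit $\WW_{n,k}$ as a two-step construction and then peel off the block structure. First, I would observe that $\WW_{n,k}$ admits the following description: draw $(h_1,\ldots,h_n)$ uniformly from $\HH_{n,k}$, and then, conditional on the $h$-vector, let the increments $Y_j := W_j - W_{j-1}$ be mutually independent with $Y_j \sim \mathrm{Gamma}(h_j - h_{j-1})$ (writing $h_0:=0$). This is immediate from the definition of the branching random walk: for any fixed $v = v_1 \cdots v_n \in T_{n,k}$, the jump $W_j(v)-W_{j-1}(v)$ equals the $v_j$-th coordinate of the independent copy $\mathbf{X}^{p(v_1\cdots v_j)}$, which by assumption is a sum of $v_j = h_j - h_{j-1}$ independent $\mathrm{Exp}(1)$ variables; independence of the $Y_j$ across $j$ comes from the fact that different path-edges use different, independent copies of $\mathbf{X}$.

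The combinatorial heart of the argument is the following decomposition of $\HH_{n,k}$. Set $N_j := n_j - n_{j-1}$ and, once block endpoints are fixed, $K_j := (h_{n_j} - h_{n_{j-1}}) - N_j \ge 0$. Under the bijection $(h_1,\ldots,h_n) \leftrightarrow (e_j)_{j=1}^n$ with $e_j := h_j - h_{j-1} \ge 1$, the set $\HH_{n,k}$ corresponds to compositions of $n+k$ into $n$ positive parts. I would then verify by direct counting that if $(h_j)$ is uniform on $\HH_{n,k}$, then conditional on the block totals $(h_{n_j} - h_{n_{j-1}})_{j=1}^m$, the within-block sub-vectors $\bigl(h_{n_{j-1}+i} - h_{n_{j-1}}\bigr)_{i=1}^{N_j}$ are mutually independent across $j$, and the $j$-th one is uniform on $\HH_{N_j,K_j}$. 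The Vandermonde identity
\[
\binom{n+k-1}{k} = \sum_{K_1+\cdots+K_m=k} \prod_{j=1}^m \binom{N_j + K_j - 1}{K_j}
\]
confirms this factorization at the level of cardinalities.

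Putting the two ingredients together, conditional on the block totals, (i) the within-block $h$-subsequences are independent and uniform on the appropriate $\HH_{N_j,K_j}$, and (ii) the Gamma jumps remain conditionally independent given all the $h$-values. Hence the block-increment sequences $(Y_j)_{j=n_{i-1}+1}^{n_i}$ are mutually independent across $i$; re-marginalizing over the uniform within-block $h$-subsequence for block $i$ reproduces exactly the two-step construction for $\WW_{N_i,K_i}$, so this sequence has the stated law. The identity $W_{n_{i-1}+\ell} - W_{n_{i-1}} = \sum_{s=n_{i-1}+1}^{n_{i-1}+\ell} Y_s$ then converts increment-level independence into the claimed independence of $(W_{n_{i-1}+1}-W_{n_{i-1}},\ldots,W_{n_i}-W_{n_i-1})$ from the initial segment $(W_1,\ldots,W_{n_{i-1}})$ and from the tail $(W_{n_i+1}-W_{n_i},\ldots,W_n-W_{n-1})$, as well as from the $h$-data outside block $i$. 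The only real obstacle is bookkeeping: one must keep straight which random objects are being conditioned on, reading the block-total conditioning as fixing a coordinate of the $h$-marginal without disturbing the conditional Gamma-jump structure. No single step is deep; the content is entirely the combinatorial factorization of $\HH_{n,k}$ by block totals.
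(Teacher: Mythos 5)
Your proof is correct, and since the paper explicitly omits the proof of Fact~4.4 (``The proof is omitted.''), there is no paper argument to compare it against; what you give is the natural argument implicit in the authors' setup. The two-step description of $\WW_{n,k}$ --- a uniformly random element of $\HH_{n,k}$, followed by conditionally independent $\mathrm{Gamma}(h_j-h_{j-1})$ increments driven by disjoint edges of the Ulam--Harris tree --- is exactly what makes $\WW_{n,k}$ tractable here, and the reduction of the claim to the factorization of uniform compositions by block totals (with the Vandermonde-type convolution verifying the counting) is the right combinatorial core.

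Two small remarks worth noting. First, the statement of the Fact conditions on a single block total $h_{n_i}-h_{n_{i-1}}$, whereas you condition on the full vector of block totals; you should observe that this is harmless because the other block totals are measurable with respect to the data you are proving independence from (the $h$-data and $W$-increments outside block $i$), and the conditional law of block $i$ given all block totals depends only on its own. Second, you quietly but correctly corrected the Fact's wording: the block sequence is \emph{not} independent of all of $(h_1,\ldots,h_n)$ given the block total, since its conditional distribution visibly depends on the within-block $h$-increments; your phrasing ``$h$-data outside block $i$'' is what the authors actually use (and all they need, e.g.\ in Lemmas~\ref{nearleading}--\ref{Dnka}, where only two blocks appear and only the cross-block independence is invoked). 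Flagging this discrepancy explicitly would strengthen the write-up, since as literally stated the independence from $(h_1,\ldots,h_n)$ fails whenever a block has length at least two.
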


The next two lemmas are analogs of Lemmas 11 and 12
in~\cite{addario07brw}, and are proved using some of the same ideas.
Whereas lemmas in~\cite{addario07brw} use heavily the fact that a
random walk $0,S_1,\ldots,S_n$ can be broken into independent sub-walks
$0,S_1,\ldots,S_j$ and $0,S_{j+1}-S_j,\ldots,S_n-S_j$, in our situation
the analogous subsequences $0,W_1,\ldots,W_j$ and
$0,W_{j+1}-W_j,\ldots,W_{n}-W_j$ are not independent. We circumvent the
lack of independence by
instead using Fact~\ref{condfact}.

\begin{lemma}\label{nearleading}
Uniformly for $S>0$, $0\le k\le n$ and $a\ge0$,
\begin{eqnarray*}
\P\{ L_a(\bv_{n,k}) | W_n(\bv_{n,k})=S \} &\ll&
\frac{(an/S)^6+1}{n}, \\
\P\{ R_a(\bv_{n,k}) | W_n(\bv_{n,k})=S \} &\ll&
\frac{(an/S)^6+1}{n}.
\end{eqnarray*}
\end{lemma}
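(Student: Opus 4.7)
First, I would reduce to the case $W_n = n$. By Proposition~\ref{expon}, the conditional distribution of $(W_1/S, \ldots, W_{n-1}/S)$ given $W_n = S$ does not depend on $S$; and combined with Fact~\ref{condfact} and the observation that $(h_1, \ldots, h_{n-1})$ is a uniform $(n-1)$-subset of $\{1, \ldots, n+k-1\}$, one checks that this distribution is simply the law of the order statistics of $n-1$ i.i.d.\ $U(0, 1)$ variables---in particular independent of $k$. Setting $A := an/S$, it therefore suffices to show $P_n(A) := \P\{W_i \ge i - A \; \forall\, i \le n \mid W_n = n\} \ll (A^6 + 1)/n$. The bound for $R_a$ will then follow by time-reversal: the reversed walk $W'_i := W_n - W_{n-i}$ has the same law as $W$ (since $(v_1, \ldots, v_n)$, being a uniform composition of $n+k$, is reversal-invariant) and satisfies $R_a(W) \Leftrightarrow L_a(W')$.

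Second, I would apply a cycle-lemma identity to express $P_n(A)$ in terms of the leading walk. Setting $\widetilde{W}_i := W_i - i$, so $\widetilde{W}_0 = \widetilde{W}_n = 0$, for each $j \in \{0, \ldots, n-1\}$ the cyclic shift $\widetilde{W}^{(j)}_i := \widetilde{W}_{(i+j) \bmod n} - \widetilde{W}_j$ has the same law as $\widetilde{W}$ (as in the proof of Proposition~\ref{leading}), and $\widetilde{W}^{(j)}$ is leading iff $j$ is the a.s.-unique argmin of $\widetilde{W}$. Using the relation $(\widetilde{W}^{(j)})_{n-j} = -\widetilde{W}_j$ and summing over $j$ yields
\[
P_n(A) \;=\; \sum_{m=1}^{n} \P\{L_0 \cap \widetilde{W}_m \in [0, A]\} \;=\; \frac{1}{n}\bigl(1 + \E[N_A \mid L_0]\bigr),
\]
where $N_A := |\{m : 1 \le m \le n-1, \; \widetilde{W}_m \in [0, A]\}|$; the $m=n$ term contributes $\P(L_0) = 1/n$ since $\widetilde{W}_n = 0$.

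Third, it remains to show $\E[N_A \mid L_0] \ll A^6 + 1$. Expanding termwise as $n \sum_m \P\{\widetilde{W}_m \in [0, A], L_0\}$, Fact~\ref{condfact} allows me to condition on $(\widetilde{W}_m, h_m)$ and factor $L_0$ into a product of two independent leading sub-walk events on $[0, m]$ and $[m, n]$ sharing the common endpoint displacement $x = \widetilde{W}_m$. Each sub-walk probability is controlled by a cycle-lemma estimate for leading bridges with non-zero endpoint drift (roughly of order $(x+1)/(\text{length})$), and combined with the density estimate $p_m(x) \ll \sqrt{n/(m(n-m))}$ this gives a per-$m$ bound of order $A^3 n^{3/2}/(m(n-m))^{3/2}$, summing to $\ll A^3 + 1 \le A^6 + 1$ after accounting for the boundary terms. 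The main obstacle is this last step: extending the cycle lemma cleanly to leading bridges with positive endpoint drift, and handling the boundary behavior when $m$ is close to $0$ or $n$ where the density and sub-walk estimates degenerate; the generous $A^6$ exponent in the statement is chosen to absorb any slack in these estimates.
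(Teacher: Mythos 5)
Your Steps 1 and 2 are correct. In particular, the reduction in Step~1 is a genuine simplification that the paper does not exploit: given $W_n=S$, the vector $(W_1,\dots,W_{n-1})$ is obtained by taking the order statistics of $n+k-1$ i.i.d.\ $\mathrm{Uniform}(0,S)$ variables and reading off the entries at positions $(h_1,\dots,h_{n-1})$, a uniform $(n-1)$-subset of $\{1,\dots,n+k-1\}$; a short multinomial computation (or the standard thinning fact) shows this is distributed as the order statistics of $n-1$ i.i.d.\ $\mathrm{Uniform}(0,S)$, eliminating $k$ entirely. The cyclic-shift identity $P_n(A)=\tfrac1n\bigl(1+\E[N_A\mid L_0]\bigr)$ is also correct, by the argument you give.

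The gap is in Step~3, and it is not merely a technical nuisance. To bound $\P\{L_0,\,\widetilde W_m\in[0,A]\}$ you factor at time $m$ and need, for each sub-bridge, a ballot-type estimate of the form $\P\{\widetilde W_i\ge -x\ \forall i\le m\mid \widetilde W_m=0\}\ll (x+1)^{c}/m$ (a ``leading bridge with positive endpoint offset''). But this is precisely the content of \refL{nearleading} itself (in its $k=0$ incarnation), so your argument as sketched is circular: the Cycle Lemma (\refR{leading}) gives only the exact value $1/m$ for $x=0$, not the drift-$x$ estimate you need, and nothing in the sketch produces that estimate independently. One could try to rescue the argument by induction on $n$, or by explicit Beta/Daniels-type calculations for the one-sided empirical process, but neither is carried out, and the density heuristic $p_m(x)\ll\sqrt{n/(m(n-m))}$ plus ``roughly $(x+1)/\text{length}$'' is exactly the claim being proved, not a known ingredient. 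The paper sidesteps this entirely by an \emph{augmentation}: it embeds $\bv_{n,k}$ into a longer walk $\bv_{n',k'}$ with buffers of length $m=a^2$ on each side, applies the \emph{exact} Cycle Lemma $1/n'$ to the long walk and again (exactly) to each buffer piece via Fact~\ref{condfact} and Proposition~\ref{leading}, and reads off $\PP{L_a(\bv_{n,k})\mid W_n=S}\ll a^6/n$ from the product lower bound for $1/n'$ --- never invoking a ballot estimate with nonzero drift. If you want to complete your route, you would need to prove the drift-ballot estimate for uniform-order-statistic bridges by some independent means (e.g.\ by the same augmentation trick applied at the level of the bridge, or by a direct generating-function/Daniels-formula computation), and also handle the degenerate ranges $m\ll A^{O(1)}$ and $n-m\ll A^{O(1)}$ where your per-$m$ bound is not summable without the $+1$ boundary correction.
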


\begin{remark*} Most likely, the exponent ``6'' can be replaced with ``2,'' in
analogy with results from~\cite{addario07brw}
about ballot theorems for random walks.
\end{remark*}

Given that $L_a(\bv_{n,k})$ holds, it is likely that $W_i-(i/n)W_n$
remains large when~$i$ is far from 1 and far from $n$.
It is also likely that $h_j$ is not too large
when $j$ is small, and, similarly, $h_n-h_j$ is not large when $j$ is
near $n$.
The next two lemmas make this very precise.

For $v \in T_n$, define the events
\[
B_a(v) = \{ \exists m\in[a^{40},n-a^{40}] \dvtx
W_m(v) \le(m/n)W_n(v) + \min(m,n-m)^{1/40} \}
\]
and
\[
D_a(v) = \{ \exists j \dvtx h_j(v) > 3aj \mbox{ or } h_n(v)-h_j(v) >
3a(n-j) \}.
\]

\begin{lemma}\label{wellbehaved}
Uniformly for $0\le k\le n/2$, $n/10 \le S \le n$ and $a\ge1$,
\[
\P\{ L_a(\bv_{n,k}), B_a(\bv_{n,k}) | W_n(\bv_{n,k})=S \} \ll\frac
{1}{na^7}.
\]
\end{lemma}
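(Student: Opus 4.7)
The plan is to union-bound over the witness $m$ of $B_a$, split the walk at index $m$ via Fact~\ref{condfact}, apply \refL{nearleading} to each piece, and then integrate against the conditional density of $W_m$. On $L_a \cap B_a$, the $L_a$-constraint $W_m \ge (m/n)S - a$ and the witness inequality for $B_a$ together force $W_m \in I_m := [(m/n)S - a,\, (m/n)S + \delta_m]$, where $\delta_m := \min(m, n-m)^{1/40}$. So it suffices to bound $\P(L_a \cap \{W_m \in I_m\} \mid W_n = S)$ for each $m \in [a^{40}, n-a^{40}]$ and sum.

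For the main per-$m$ estimate, fix $m \le n/2$ and condition on $h$ and on $W_m = s_m \in I_m$. Writing $\delta' := s_m - (m/n)S \in [-a, \delta_m]$, Fact~\ref{condfact} yields two conditionally independent sub-walks with respective laws $\WW_{m, h_m - m}$ and $\WW_{n-m,\, k - (h_m - m)}$ conditioned on endpoints $s_m$ and $S - s_m$. The identity $(i/n)S - a = (i/m) s_m - (i/m) \delta' - a$ shows that $L_a$ restricted to either sub-walk is implied by $L_{a + \delta_m}$ on that piece (relative to its own endpoint). Since $S \ge n/10$ and $s_m \ge (m/n)S - a \asymp m$ for $m \gg a$ (which we may assume, since $m \ge a^{40}$ and the bound is trivial for $a = O(1)$), the rescaling factors $(a+\delta_m) m / s_m$ and $(a+\delta_m)(n-m)/(S-s_m)$ appearing in \refL{nearleading} are both $O(a+\delta_m)$; hence
\[
\P(L_a \mid h, W_m = s_m, W_n = S) \ll \frac{(a + \delta_m)^{12}}{m(n-m)},
\]
uniformly in $h$ and $s_m \in I_m$.

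To integrate, a standard order-statistic computation -- collapsing the sum over $h$ uniform on $\HH_{n,k}$ -- shows that, given only $W_n = S$, the marginal density of $W_m$ is the $\mathrm{Beta}(m, n-m)$ density on $[0, S]$, \emph{independently of $k$}. Stirling then gives $\P(W_m \in I_m \mid W_n = S) \ll (a + \delta_m)\sqrt{n/(m(n-m))}$. Combining with the display above, the $m$th summand ($m \le n/2$) is
\[
\P\bigpar{L_a \cap \{W_m \in I_m\} \mid W_n = S} \ll \frac{(a+\delta_m)^{13}}{m^{3/2}(n-m)}.
\]
For $m \ge a^{40}$ we have $\delta_m = m^{1/40} \ge a$, so the summand is $\ll m^{-47/40}/n$, and $\sum_{m \ge a^{40}} m^{-47/40} \ll a^{-7}$. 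A symmetric bound with $m$ and $n-m$ interchanged handles $m \ge n/2$, giving the claim.

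The main obstacle is ensuring that the parameter of the near-leading condition on each sub-walk is $O(a + \delta_m)$ rather than being amplified by $m/s_m$, which would blow up for small $m$; this requires carefully exploiting the lower bound $s_m \ge (m/n)S - a$ (and its analogue for the second piece) together with the hypothesis $S \ge n/10$. A minor technicality is that the induced sub-walks may have their $k$-parameter exceeding their length, but the bound in \refL{nearleading} depends only on $a$, $n$, $S$ and extends naturally.
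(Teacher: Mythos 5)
Your proposal is correct and follows essentially the same strategy as the paper's proof: union-bound over the witness index $m$ of $B_a$, split the path at $m$ using Fact~\ref{condfact}, apply both inequalities of Lemma~\ref{nearleading} (with parameter $O(a+\delta_m)$) to the two sub-walks, and integrate against the density of $W_m$ near $(m/n)S$. The one genuine simplification you offer is the observation that the marginal density of $W_m$ given only $W_n=S$ is exactly $\mathrm{Beta}(m,n-m)$ on $[0,S]$, independently of $k$ (this is a clean binomial-theorem identity, which checks out); the paper instead conditions on $h_m=m+l$, uses the conditional $\mathrm{Beta}(m+l,n+k-m-l)$ law, and sums over $l$ at the end -- so you trade a sum over $l$ for a single integration. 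One small imprecision worth noting: on the second piece (the last $n-m$ steps reversed) the inherited constraint is a \emph{trailing} condition $R_{a+\delta_m}$ rather than a leading one, so it is the second inequality of Lemma~\ref{nearleading} that applies there, exactly as in the paper's third factor.
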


\begin{lemma}\label{Dnka}
Uniformly for $0\le k\le n/2$, $n/10 \le S \le n$ and $a\ge0$,
\[
\P\{ L_a(\bv_{n,k}),D_a(\bv_{n,k}) | W_n(\bv_{n,k})=S \} \ll\frac
{e^{-a}}{n}.
\]
\end{lemma}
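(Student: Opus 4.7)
The plan is to split $D_a$ as $D_a^L \cup D_a^R$, where $D_a^L = \{\exists j : h_j > 3aj\}$ and $D_a^R = \{\exists j : h_n - h_j > 3a(n-j)\}$. Reversing the path from the root to $v$ (which sends $h_j \mapsto h_n - h_{n-j}$ and $W_j \mapsto W_n - W_{n-j}$) swaps $L_a$ with $R_a$ and $D_a^L$ with $D_a^R$, so the bound on $\P\{L_a, D_a^R \mid W_n = S\}$ reduces to one on $\P\{R_a, D_a^L \mid W_n = S\}$ for a walk of the same distribution; this is handled by the $R_a$-half of Lemma~\ref{nearleading} and the argument below. I focus on bounding $\P\{L_a, D_a^L \mid W_n = S\}$. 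Since $h_j \le k+j \le n/2 + j$, the event $\{h_j > 3aj\}$ is empty unless $j < n/(6a-2)$, so a union bound yields $\P\{L_a, D_a^L \mid W_n = S\} \le \sum_j \P\{L_a, h_j > 3aj \mid W_n = S\}$ with only $O(n/a)$ terms contributing.

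For each $j$, I condition on $h_j = h > 3aj$. By Fact~\ref{condfact}, the two path segments $(W_1, \ldots, W_j)$ and $(W_{j+1}-W_j, \ldots, W_n-W_j)$ are conditionally independent with laws $\WW_{j, h-j}$ and $\WW_{n-j, k-h+j}$. Conditioning further on $W_j = \beta S$ (whose density given $h_j = h$, $W_n = S$ is Beta$(h, n+k-h)$, concentrated near $h/(n+k) > j/n$), the events $L_a^{(1)}$ and $L_a^{(2)}$ (the restrictions of $L_a$ to the two segments) remain independent. In the concentration regime $\beta > j/n$, translating $L_a^{(2)}$ via $W'_m = W_{j+m} - W_j$ produces a near-leading condition for the second segment at scale $(1-\beta) S$ and length $n-j$, with effective slack $a + (n\beta - j)S/n \ge a$. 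Applying Lemma~\ref{nearleading} to this, and the trivial bound $\P\{L_a^{(1)} \mid \cdots\} \le 1$ on the first, then integrating against the Beta density of $\beta$, gives
\[
\P\{L_a \mid h_j = h, W_n = S\} \ll \frac{1}{n},
\]
uniformly in $h > 3aj$ (using $S \asymp n$).

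The final ingredient is a Chernoff-type bound $\P\{h_j > 3aj\} \ll e^{-aj}$: viewing $(h_1, \ldots, h_{n-1})$ as a uniform $(n-1)$-subset of $\{1, \ldots, n+k-1\}$, $h_j$ is the $j$-th order statistic of this hypergeometric sample, and the KL-divergence form of the Chernoff bound gives an exponent asymptotically $3aj$ as $a \to \infty$ (safely $\ge aj$ for $a \ge 1$; the claim is trivial for $a \le 1$ upon choosing the implicit constant). Combining all the above and summing over $j \ge 1$ and $h > 3aj$ yields $\P\{L_a, D_a^L \mid W_n = S\} \ll e^{-a}/n$, which (together with the reversal argument for $D_a^R$) is the desired bound.

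The main obstacle will be carefully executing the integration over $\beta$: the polynomial correction $(a_2(n-j)/((1-\beta)S))^6$ from Lemma~\ref{nearleading} could blow up near $\beta = 1$, but the Beta density factor $(1-\beta)^{n+k-h-1}$ (with $n+k-h \gtrsim n$ in the relevant range $h \le k+j \le n/2 + j$) provides more than enough decay to dominate it. Simultaneously, the concentration of the Beta density around $h/(n+k)$ guarantees that the dominant contribution to the integral comes from $\beta$ well-separated from $1$, where both the effective slack $a_2$ and the correction factor remain under control.
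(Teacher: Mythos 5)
Your overall strategy---decompose $D_a = D_a^L \cup D_a^R$, condition on $h_j = h$, split the walk at step $j$ via Fact~\ref{condfact}, and feed one segment into Lemma~\ref{nearleading}---matches the paper's in structure, and the reversal symmetry you use for $D_a^R$ is a legitimate (and elegant) substitute for the paper's direct argument. However, the central intermediate claim, that $\P\{L_a \mid h_j = h, W_n = S\} \ll 1/n$ \emph{uniformly} in $h > 3aj$, is not established by what you write, and in fact the route you describe cannot yield it. When $h > 3aj$, the Beta$(h, n+k-h)$ density of $\beta = W_j/S$ concentrates near $h/(n+k)$, which is already $\asymp h/n$; so the ``effective slack'' $a + (n\beta - j)S/n$ at the \emph{bulk} of the Beta distribution is $\asymp h$, not $O(a)$. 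Plugging $a' \asymp h$ into Lemma~\ref{nearleading} gives $\ll h^6/(n-j)$, and integrating against the Beta density therefore produces a bound of size $\asymp h^7/n$, not $\ll 1/n$. Your ``main obstacle'' paragraph worries about the wrong regime: the blow-up is not at $\beta \to 1$ (that tail is indeed killed by $(1-\beta)^{n+k-h-1}$), but at the typical $\beta \approx h/(n+k)$, where the slack is large and the Beta density is $\Theta(\sqrt{n})$, not small. There is also a structural issue even before integrating: after translating to the second segment, the required event is $W'_m \ge \hat\ell(m) - s(m)$ with slack $s(m)$ decreasing linearly from $a+b$ down to $a$---a funnel, not a parallel band---so it is only \emph{implied} by $L_{a+b}$, which is why the Lemma~\ref{nearleading} upper bound degrades with $b$.

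Once the correct bound $\ll h^7/n$ is in hand, the Chernoff tail estimate $\P\{h_j > 3aj\} \ll e^{-aj}$ is no longer enough to close the argument, since you must now sum $h^7 \P\{h_j = h\}$ over $h > 3aj$ rather than just bound the tail probability. The paper handles this by proving the \emph{pointwise} estimate $\P\{h_j = h\} < e^{-h/2}$ directly from Stirling (using $h > 3aj$ and $k \le n/2$), which comfortably absorbs the $h^7$ and, after summing over $h$ and $j$, yields $\ll e^{-a}/n$. So: replace the claim ``$\ll 1/n$ uniformly in $h$'' with the weaker $\ll h^7/n$, and strengthen the Chernoff bound to a pointwise bound on $\P\{h_j = h\}$; then the argument goes through essentially as in the paper. (For the $D_a^R$ half, the paper argues directly by conditioning on $h_n - h_j = h > 3a(n-j)$ with $n - j$ small and using only the first $j$ steps, exploiting that $W_j - (j/n)S \le n-j$ is automatically bounded there; your reversal route is also fine, but note that the reversed problem is $\P\{R_a, D_a^L\}$ and the first-segment constraint $W_j \le (j/n)S + a$ is now highly restrictive, so the Beta tail, not the trivial bound on $L_a^{(1)}$, must do the work.)
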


\newcommand{\tW}{\widetilde{W}}
\newcommand{\hW}{\widehat{W}}

\begin{pf*}{Proof of Lemma \protect\ref{nearleading}}
It suffices to prove the lemma when $a \geq10$.
We also assume $a \le n^{1/6}$, or else the conclusion is trivial.
Finally, in light of Proposition~\ref{expon}, we may assume without
loss of generality
that $S=n+k$, so that $n\le S\le2n$.\vadjust{\goodbreak}

Let
\begin{eqnarray*}
m &=& a^2,\qquad l=\lceil km/n\rceil,\qquad n'=n+2m,\qquad k'=k+2l,\\
\lam&=&\frac{n'+k'}{n'},\qquad a' = \frac{an\lam}{S}.
\end{eqnarray*}
We remark that $m,l \leq n^{1/3} $,
$a\lambda/2 \leq a' \leq a\lambda$,
and for $n$ large enough \mbox{$1 \leq\lambda\leq3$}.

By Proposition~\ref{leading},
\begin{equation}\label{nla}
A:= \P\{ L_0(\bv_{n',k'}) | W_{n'}(\bv_{n',k'})=\lam n' \} = \frac{1}{n'}.
\end{equation}

Now let $\bW'=(W'_1, \ldots, W'_{n'})$ be a sequence with law $\WW_{n',k'}$.
We bound $A$ from below by counting only sequences with
$h'_m=m+l$ and $h'_{n-m}=(n'+k')-(m+l)$. In this way, we can break
$\bW'$ into three subsequences, namely,
\begin{eqnarray*}
\mathbf{\tW},&&\qquad \mbox{where } \tW_j=W_j', \tilde{h}_j=h'_j
\ (1\le
j\le m), \\
\bW, &&\qquad\mbox{where } W_j=W_{j+m}'-W_m', h_j=h_{j+m}'-h_j'\ (1\le
j\le
n),\\
\mathbf{\hW}, &&\qquad\mbox{where } \hW_j=W_{n'}'-W_{n'-j}', \hat{h}_j=n'+k'
-h'_{n'-j}\ (1\le j\le m).
\end{eqnarray*}
That is, $\mathbf{\tW}$ captures the first $m$ steps, $\bW$ the next $n$
steps, and $\mathbf{\hW}$ the last $m$ steps taken in reverse order.

We'll work with four events:
\begin{eqnarray*}
E_1 &= &\{ h'_m=m+l, h'_{n'-m}=(n'+k')-(m+l)\}, \\
E_2 &= &\{ \tW_j \ge\lam j (j\le m), \tW_m-\lam m \in[a',2a']
\}
, \\
E_3 &=& \{ \hW_j \le\lam j (j\le m), \hW_m-\lam m \in
[-3a',-2a'] \}, \\
E_4(x) &=& \{ W_j \ge\lam j - x (j<n)\}.
\end{eqnarray*}
Given $E_1$, $\mathbf{\tW}$ and $\mathbf{\hW}$ have law $\WW_{m,l}$,
and $\bW$ has law $\WW_{n,k}$, and all three are independent.
Also given $E_1$, the events $E_2$, $E_3$ and $E_4(x)$ are independent. Thus,
\begin{eqnarray}
\label{nlb}\quad
A & \ge&\P\{ E_1 | W_{n'}'=\lambda n'\} \P\{ E_2 |
E_1,W_{n'}'=\lambda n' \} \P\{ E_3 | E_1,W_{n'}'=\lambda n' \}
\nonumber
\\[-8pt]
\\[-8pt]
\nonumber
&&{} \times\mathop{\inf_{a'\le x\le2a' }}_{ -3a' \le y \le-2a'}
\P\{E_4(x) | E_1,W_{n'}'=\lambda n', W_n = \lambda n - x - y \}.
\end{eqnarray}
Since $m+l = O(n^{1/3})$, if $k>0$, then a slightly tedious but routine
computation with Stirling's formula and (\ref{eq:tnk}) gives
\begin{eqnarray}\label{E1}
\P\{E_1 | W_{n'}' = \lambda n'\}& =& \frac{{ m+l-1\choose l}^2
{n+k-1\choose k}}{{n+2m+2l+k-1 \choose k+2l}}
\order\pmatrix{{m+l-1}\vspace*{2pt}\cr{l}}^2 \frac{k^{2l}
n^{2m}}{(n+k)^{2m+2l}}
\nonumber
\\[-8pt]
\\[-8pt]
\nonumber
&\order&
\frac{1}{l}.
\end{eqnarray}
When $k=0$, trivially $\P\{E_1 | W_{n'}' = \lambda n'\}=1$.
For the remainder of the proof we write $\P^{\mathrm{c}}\{ \cdot \}$
to mean $\P\{
\cdot
| E_1,W_{n'}' = \lambda n'\}$.
Next,
\begin{eqnarray} \label{E2givenE1}
\P^{\mathrm{c}}\{ E_2 \}& =& \P^{\mathrm{c}}\{ \tW_j \ge\lam j
(j<m) | \tW_m - \lam m \in [a',2a'] \}
\nonumber
\\[-8pt]
\\[-8pt]
\nonumber
&&{}\times \P^{\mathrm{c}}\{ \tW
_m-\lam m \in[a',2a'] \}.
\end{eqnarray}
Given that $W'_{n'}=\lam n'$ and $h_m' = m+l$, $\tW_m$ has distribution
\[
\lambda n' \cdot\operatorname{Beta}(m+l,n+k-m-l)
\]
and, in particular, has mean
\[
\lambda n' (m+l)/(n+k) = \lambda m + O\biggl( \frac{m^2}{n} \biggr) =
\lambda m + O(1)
\]
and variance
\[
(\lambda n')^2 \frac{(m+l)(n+k-m-l)}{(n+k)^2 (n+k+1)} = O(m).
\]
Since $a' \ge\frac{a}{2} \ge\frac12\sqrt{m} $, it follows from the
definition of a Beta random variable that
the second probability on the right-hand side of (\ref{E2givenE1}) is $\gg
1$. 
Applying Proposition~\ref{expon} followed by Proposition~\ref
{leading}, the first factor on the right-hand
side of (\ref{E2givenE1}) is
\begin{eqnarray*}
&\ge&\inf_{a'\le x\le2a'} \P^c \{ \tW_j \ge\lam j (j<m) | \tW_m
=\lam m+x \}\\
&\ge&\inf_{a'\le x\le2a'} \P\{ L_0(\bv_{m,l}) | W_m(\bv_{m,l})
=\lam m+x \} = \frac{1}{m}.
\end{eqnarray*}
Therefore,
\begin{equation}\label{E2}
\P^c \{E_2\} \gg\frac{1}{m} = \frac{1}{a^2}.
\end{equation}
Similarly,
\begin{equation}\label{E3}\qquad
\P^c \{ E_3 \} \gg\inf_{-3a' \le y\le-2a'} \P\{ L_0(\bv_{m,l}) |
W_m(\bv_{m,l}) =\lam m+y \} = \frac{1}{m} = \frac{1}{a^2}.
\end{equation}
Last, for $a'\le x\le2a'$ and $-3a' \le y \le-2a'$, Proposition
\ref{expon} yields
\begin{eqnarray}\label{E4}
\P^c \{ E_4(x) | W_n = \lambda n - x - y \} &= &\P\biggl\{ \frac
{W_j}{W_n} \ge\frac{\lam j-x}
{\lam n-x-y}\ (j\le n) \biggr\} \nonumber\\
&\ge&\P\biggl\{ \frac{W_j}{W_n}
\ge
\frac{j}{n}-\frac{a}{S}\ (j\le n) \biggr\} \\
&=& \P\{ L_a(\bv _{n,k}) |
W_n(\bv_{n,k})=S \}.\nonumber
\end{eqnarray}
Together, \eqref{nla}--\eqref{E4} imply
\[
\frac{1}{n}\gg\frac{1}{a^6} \P\{ L_a(\bv_{n,k}) | W_n(\bv
_{n,k})=S \},
\]
which proves the first assertion of the lemma. The proof of the second
part is identical.
\end{pf*}

\begin{pf*}{Proof of Lemma \protect\ref{wellbehaved}}
Fix $k$, $S$ and $a$ as in the statement of the lemma.
We write $W_m=W_m(\bv_{n,k})$, $h_m=h_m(\bv_{n,k})$ and so on.
If $a^{40}>n/2$, then there
is nothing to prove so we assume $a^{40}\le n/2$.
For $a^{40} \le m \le n-a^{40}$ and $l\ge0$, let
\[
A_{m,l} = \P\biggl\{ L_a(\bv_{n,k}), W_m \le\frac{m}{n}S+ \min
(m,n-m)^{1/40} | W_n=S, h_m=m+l \biggr\}.
\]
Break $(W_1,\ldots,W_n)$ into two sequences: $\tW_j=W_j$ for $j\le m$,
and $\hW_j=W_n-W_{n-j}$ for $j\le n-m$ (the latter being the final
$n-m$ steps taken in reverse).
Given $h_m=h_m-h_0$, these sequences are independent by Fact~\ref{condfact}.
We write $\P^{\mathrm{c}}\{ \cdot \}$ for the conditional
probability measure $\P\{
\cdot| h_m=m+l \}$, and
$\E^{\mathrm{c}}\{ \cdot \}$ for the corresponding expectation operator.
Also, let $\lam=\frac{n+k}{n}$.

Suppose first that $a^{40} \le m\le n/2$. Put $b=m^{1/40} \frac{n+k}{S}$
and $a'=a \frac{n+k}{S}$. Note that
$\E^{\mathrm{c}}\{ \tW_m | W_n=S \} = S \cdot(m+l)/(n+k)$.
Rescaling by $(n+k)/S$ (this is allowed by the comment just after Proposition
\ref{expon}), by the definitions of $b$ and $a'$ we have
\begin{eqnarray}\label{amlbound}
\qquad A_{m,l} &\le&\P^{\mathrm{c}}\{ \tW_m-\lam m \in[-a',b] |
W_n=\lambda n \}\nonumber
\\
& &{}\times\sup_{-a'\le x\le b} \P^{\mathrm{c}}\{ \tW_j\ge\lam j -
a' (j<m) | \tW_m=\lam m +x \} \\
&&{} \times\sup_{-b\le x\le a'} \P^{\mathrm{c}}\{ \hW_j \le\lam j +
a' (j<n-m) | \hW_{n-m}=\lam(n-m)+x \}.\nonumber
\end{eqnarray}
Given that $W_n=\lambda n$ and $h_m=m+l$, $\tW_m$ has distribution
$\lambda n \cdot\operatorname{Beta}(m+l,k+n-m-l)$
and so the first factor on the RHS of (\ref{amlbound}) is $O(b/\sqrt
{m})$ uniformly in $l$ and in $n$.
Applying Proposition~\ref{expon} and the first inequality of Lemma
\ref{nearleading}, the second factor on the RHS of (\ref{amlbound}) is
\begin{eqnarray*}
&\le&\P^{\mathrm{c}}\biggl\{ \frac{\tW_j}{\tW_m} \ge\frac{j}{m} -
\frac {a'+bj/m}{m\lam +b}\ (j<m) \biggr\} \\
&\le&\P^{\mathrm{c}}\biggl\{ \frac
{\tW_j}{\tW_m} \ge\frac{j}{m} - \frac {a'+b}{m\lam+b}\ (j<m) \biggr\}\\
&=&\P\{ L_{a'+b}(\bv_{m,l}) | W_m(\bv_{m,l})=m\lam+b \}\ll
\frac
{(a'+b)^6+1}{m} \ll\frac{b^6}{m},
\end{eqnarray*}
so the product of the first two factors on the right-hand side of (\ref
{amlbound}) is $O(b^7 m^{-3/2})$.
Similarly, by Proposition~\ref{expon} and the second inequality of\vadjust{\goodbreak}
Lem\-ma~\ref{nearleading}, the third factor
on the RHS of (\ref{amlbound}) is
\[
\le\P\biggl\{ \frac{\hW_j}{\hW_{n-m}} \le\frac{j}{n-m} + \frac
{a'+b}{\lam(n-m)-b}\ (j<n-m) \biggr\} \ll\frac{b^6}{n-m} \ll\frac{b^6}{n}.
\]
Combining these bounds, we obtain that when $a^{40} \leq m \leq n/2$,
$A_{m,l} \ll b^{13}/(nm^{3/2})$.
The estimation of $A_{m,l}$ with $m>n/2$ is identical, by reversing the roles
of $\mathbf{\tW}$ and $\mathbf{\hW}$. Therefore,
\begin{eqnarray*}
\P\bigl( L_a(\bv_{n,k}), B_a(\bv_{n,k}) | W_n=S \bigr) &\ll&
\sum_{a^{40}\le m\le n/2} \sum_{l\ge0} \P\{ h_m=m+l \} A_{m,l} \\
&\ll&\frac{1}{n} \sum_{a^{40}\le m\le n/2} \frac{m^{13/40}}{m^{3/2}}
\ll\frac{1}{na^7}.
\end{eqnarray*}
\upqed\end{pf*}

\begin{pf*}{Proof of Lemma \protect\ref{Dnka}}
As before, we write $W_n=W_n(\bv_{n,k})$, $h_j=h_j(\bv_{n,k})$ and so on.
We may assume $a\ge10$, or else the conclusion follows from Lemma~\ref{nearleading}.
We also assume $k\ge1$, or else $h_j=j$ for every $j$
and $D_a(\bv_{n,k})$ is impossible.
For fixed $j$, given $h_j$, the sequence
$(W_1,\ldots,W_{n})$ breaks into two independent sequences $\mathbf
{\tW}$,
consisting of the first
$j$ steps, and $\mathbf{\hW}$, consisting of the last $n-j$ steps taken
in reverse.
If $W_n=S$ and $L_a(\bv_{n,k})$ holds, then
there is an integer $b\ge-a-1$ so that $\tW_j-\frac{j}{n}S \in[b,b+1]$.
Consequently, $\hW_{n-j} - \frac{n-j}{n}S \in[-b-1,-b]$.

Fix $h$ such that $h > 3aj$ and suppose that $h_j=j$---note
that in this case $j< \frac{n+k}{3a} \le\frac{n}{20}$.
Given that $h_n=h$ and $W_n=S$, $\tW_j$ has distribution $S \cdot
\operatorname{Beta}(h,n+k-h)$.
Since $k \leq n/2$ and $S \leq n$, it is then straightforward to check that
$\P\{ \tW_j\ge b | h_n=h, W_n=S \}\le e^{-b/4}$ for $b\ge4h$.
We also have
\begin{eqnarray*}
&&\mathbb{P}\biggl\{\hW_i\ge\frac{i}{n}S-(a+b) (i\le n-j)
\Big|\hW_{n-j}-\frac{n-j}{n}S\in[-b-1,-b],\\
&&\hspace*{228pt}{}h_j=h,W_n=S\biggr\} \\
&&\qquad \le\P\biggl\{ L_{2a+b}(\bv_{n-j,k-h+j}) \Big| W_{n-j}(\bv
_{n-j,k-h+j})-\frac{n-j}{n}S\in[-b-1,-b] \biggr\} \\
&&\qquad \ll\frac{(2a+b)^6}{n}
\end{eqnarray*}
by Lemma~\ref{nearleading} if $b\le n^{1/6}$, and trivially otherwise.
Summing on $b$, we find that
\[
\P\{ L_a(\bv_{n,k}) | h_j=h, W_n=S \} \ll\sum_{-a-1\le b\le4h}
\frac{(2a+b)^6}{n} + \sum_{b>4h} \frac{(2a+b)^6}{ne^{b/4}}
\ll\frac{h^7}{n}.
\]
Note that $(h_1,\ldots,h_n)$ is independent of $W_n$ and
so $ \P\{ h_j=h | W_n=S \} = \P\{ h_j=h \}$.
Since $h-j\le k\le n/2$, by Stirling's formula,
\begin{eqnarray} \label{hjstirlingbound}
\P\{ h_j=h \} &=& \pmatrix{{h-1}\vspace*{2pt}\cr{h-j}} \frac{{{n+k-h-1}\choose{k-h+j}}}{
{{n+k-1}\choose{k}}} \nonumber\\
&\le&\frac{h^j}{j!} \cdot\frac{(n-1)\cdots(n-j) \cdot k \cdots
(k-h+j+1)} {(n+k-1)\cdots(n+k-h)} \\
&\le&\biggl( \frac{eh}{j} \biggr)^j \biggl(\frac{k}{n}\biggr)^{h-j} \le(6ae)^{h/(3a)} 2^{-h}
< e^{-h/2},\nonumber
\end{eqnarray}
the last inequality holding at least for $a \geq5$ (which we have assumed).
Summing over $h>3aj$, then over $j$, we find that
\begin{equation}\label{D1}
\qquad \P\{ L_a(\bv_{n,k}), \exists j\dvtx h_j>3aj | W_n=S \} \ll\frac{1}{n}
\sum_{j\ge1} \sum_{h>3aj} h^7 e^{-h/2} \ll\frac{e^{-a}}{n}.
\end{equation}

Next, suppose $h=h_n-h_{j}>3a(n-j)$, in which case $n-j < \frac{n}{20}$.
Let $b'=W_j-\frac{j}{n}S$.
Since $W_{i+1}\ge W_i$ for all $i$, $W_j \leq S$ and so $b' \le\frac
{n-j}{n}S \le n-j$.
Also, in order for $L_{n,k}(a)$ to occur, we must have $b' \geq-a$.
Thus, writing
$\mathcal{I} = [-a,n-j]$, and ignoring the last $n-j$ steps of $W$ for
an upper bound, we have
\begin{eqnarray*}
&& \P\{ L_a(\bv_{n,k}) | h_n-h_j=h, W_n=S \} \\
&&\qquad \le\sup_{b' \in\mathcal{I}} \P\biggl\{ \tW_i\ge\frac{i}{n}S-a (i\le
j) \Big| \tW_j=\frac{j}{n}S+b',h_n-h_j=h \biggr\}\\
&&\qquad \le\sup_{b' \in\mathcal{I}} \P\biggl\{ L_{a+b'}(\bv_{j,n+k-h}) \Big|
W_j(\bv_{j,n+k-h})=\frac{j}{n}S + b' \biggr\}.
\end{eqnarray*}
Note that $a \leq n/2$ [or else $3a > 3n/2 > n+k$ and $D_a(\bv_{n,k})$
is impossible].
Since $j \geq\frac{19}{20} n$ and $b' \geq-a \geq-n/2$,
by Lemma~\ref{nearleading} and straightforward manipulations, the last
probability is $O(\frac{1}{n}(a+b')^6)=O(\frac{1}{n}(a+n-j)^6)$.
Also, $\P\{ h_n-h_{j}=h \}=\P\{ h_{n-j}=h \}<e^{-h/2}$ by the same
calculation as in (\ref{hjstirlingbound}). Summing
over $h>3a(n-j)$ and $j\le n-1$ gives
\[
\P\{ L_a(\bv_{n,k}), \exists j\dvtx h_n-h_{j}>3a(n-j) | W_n=S \} \ll
\frac
{e^{-a}}{n}.
\]
Together with \eqref{D1}, this completes the proof.
\end{pf*}

%
\section{\texorpdfstring{The lower bound in Theorem \protect\ref{T1}}{The lower bound in Theorem 1.1}}\label{sec5}
%
We continue to adopt the notational conventions from the previous section.
Let $c$ be a sufficiently large positive constant, and $b=e^{c/3}$. Let
\[
Y_n = \bigcup_{|k-n/e|\le\sqrt{6n\log n}} T_{n,k}
\]
and put $m_n=\frac{n}{e}+\frac{3\log n}{2e}$.
If $M_n\le m_n - c$, then one of the following must occur:
\begin{longlist}[(iii)]
\item[(i)] For some $v\in T_n$, $S(v)\le m_n - \log n$;
\item[(ii)] For some $k$ satisfying $|k-n/e|>\sqrt{6n\log n}$ and some
$v\in T_{n,k}$,\break \mbox{$S(v) \le m_n$};
\item[(iii)] For some $v\in Y_n$, $m_n-\log n \le S(v)\le m_n$ and $W_i \le
(i/n)W_n-\log n$ for some $i$;
\item[(iv)] For some $v\in Y_n$, $m_n-\log n\le S(v)\le m_n-c$ and
$W_i \ge(i/n)W_n-b$ for all $i$;
\item[(v)] For some $v\in Y_n$ and some integer $a\in[b,\log n+1]$,
$m_n-\log n\le S(v)\le m_n$,
$W_i\ge(i/n)W_n-a$ for all $i$ and $W_j< (j/n)W_n-(a-1)$ for some $j$
(write $F_{a,j}$ for the event that this
occurs for a given $a$ and $j$ with $j$ minimal, and note that these
events are disjoint).
\end{longlist}

By Lemma~\ref{ETnx} and Stirling's forumula, the probability of (i) is
at most
$\E\{T_n(m_n-\log n)\}= O(n^{1-e})$.
The probability of (ii) is $O(n^{-1/2})$ by Lem\-ma~\ref{lemma_ab}(b). If (iii) occurs, then $M_i \le(i/n)m_n-\log n$,
and this
happens with probability at most $\E\{T_i((i/n)m_n-\log n)\}$, which is
$O(n^{3/2-e}i^{-1/2})$
by Lemma~\ref{ETnx}.
Summing on $i$, we find that (iii) occurs with probability $O(n^{ 2-e})$.

To bound the probability of the event in (iv), we write
$E_k$ for the event that there is $v \in T_{n,k}$ for which $m_n - \log
n \leq S(v) \leq m_n-c$ and
$W_i \geq(i/n) W_n - b$ for all~$i$, so that by a union bound and
Lemma~\ref{nearleading}, the probability
of (iv) is at most
\begin{eqnarray}\label{randomtomany}
&& \sum_{|k-n/e| \leq\sqrt{6n\log n}} \P\{ E_k \} \nonumber\\
&&\qquad\le \sum_{|k-n/e| \leq\sqrt{6n\log n}} |T_{n,k}| \P\{ m_n-\log n
\leq S(\bv_{n,k}) \leq m_n-c, L_b(\bv_{n,k}) \} \nonumber\\
&&\qquad\ll \sum_{|k-n/e| \leq\sqrt{6n\log n}} |T_{n,k}| \P\{ m_n-\log n
\leq S(\bv_{n,k})\leq m_n-c \} \cdot\frac{b^6}{n} \\
&&\qquad\leq \frac{b^6}{n} \cdot\E\{T_n(m_n-c)\} \nonumber\\
&&\qquad\ll e^{(2-e)c}.\nonumber
\end{eqnarray}
[This line of argument will arise again in bounding (v), and we will
omit the details.]

Finally, we bound (v). To do so, we are forced to separately treat $j$
in three different ranges.
First suppose $j \leq a^{40}$. If $F_{a,j}$ occurs, then $M_j \le
(j/n)m_n-a$, the probability of which is $O(j^{-1/2}e^{-ea})$
by Lemma~\ref{ETnx}. Summing on $a$ and on $j \leq a^{40}$ gives a
total probability
of $O(e^{-2b})$ for this range of parameters.

Next suppose that $a^{40}<j<n-a^{40}$, so that $(\min(j,n-j))^{1/40}
\geq a$. If
$F_{a,j}$ occurs, then for some $v \in Y_n$, $L_a(v)$ and $B_a(v)$ both occur.
Note that for $n$ large enough
$n/10 \leq m_n-\log n \leq m_n \leq n$, and for all $k$ for which
$T_{n,k} \subseteq Y_n$ we have $0 \leq k \leq n/2$.
Thus, for such $n,k$ and $a$, we may apply Lemma~\ref{wellbehaved} to
see that
\[
\P\{ L_a(\bv_{n,k}),B_a(\bv_{n,k}) | m_n-\log n \leq W_n(\bv_{n,k})
\leq m_n \} \ll\frac{1}{na^7}.
\]
Further, the expected number of $v \in Y_n$ with $S(v)\le m_n$ is
$O(n)$ by Lemma~\ref{ETnx}.
By these two bounds and a reprise of the argument leading to (\ref
{randomtomany}), we see that for a given $a$, the probability of
$\bigcup_{j \in[a^{40},n-a^{40}]} F_{a,j}$ is $O(1/a^{7})$ and summing
over integers $a \in[b,\log n+1]$ gives a total probability of
$O(1/b^6)=O(e^{-2c})$.

Now suppose $F_{a,j}$ holds with $j \in[n-a^{40},n]$ and $a \in[b,
\log n+1]$.
By the definition of $F_{a,j}$, letting $w$ be the unique ancestor of
$v$ in $T_j$, the event $L_{a-1}(w)$ also occurs.
Since $j \geq n-(\log n+1)^{40}$, for $n$ sufficiently large
$|m_j - (j/n) m_n| \leq1$ and, hence, $S(w) \leq m_j+1-(a-1)$.
On the other hand, for any integer $k' \geq1$,
by Lemma~\ref{nearleading} we have
\[
\P\{ W_j(\mathbf{v}_{j,k'}) \leq m_j+2-a,L_{a-1}(\bv_{j,k'}) \}
\ll\frac{a^6}{j} \P\{ W_j(\mathbf{v}_{j,k'}) \leq m_j+2-a \}.
\]
By Lemma~\ref{ETnx}, it follows that
\[
\P\{ F_{a,j} \} \ll\frac{a^6}{j} \E|T_j(m_j+2-a)|
\ll a^6 e^{-ea}.
\]
Summing first over $j \in[n-a^{40},n]$, then over $a \in[b,\log
n+1]$, we see that
the probability $F_{a,j}$ occurs for any $a$ and $j$ in this range is
\[
\ll b^{46}e^{-eb} = \exp\{(46/3) c - e^{1+c/3}\} < e^{-2c}
\]
as long as $c$ is large enough.
Combining the three ranges, we obtain that (v) occurs with probability
$\ll e^{-2c}$.
Altogether, the probability that one of (i)--(v) holds is $\ll e^{(2-e)c},$
which is less than $1/2$ if $c$ is chosen large enough. Hence,
$\tM_n\ge m_n-c$.


\section{\texorpdfstring{The upper bound in Theorem \protect\ref{T1}}{The upper bound in Theorem 1.1}}\label{sec6}



For the upper bound for $\operatorname{median}(M_n)$,
we use a second-moment method.
By the Cauchy--Schwarz inequality, for any nonnegative
random variable $X$,
\begin{equation}\label{eq:smm_j}
\P\{X>0\} \geq\frac{[\E{X}]^2}{\E{X^2}}.
\end{equation}
When $X$ is the size of some random subset $\mathcal{X}$ of a ground
set $V_0$, we may rewrite~(\ref{eq:smm_j}) using the
fact that
\[
\E{X^2} = \sum_{v,w \in V_0} \P\{v \in\XX,w \in\XX\} = \sum_{v
\in
V_0} \mathbb E[X|v \in\XX]\P\{v \in\XX\},
\]
so that
\begin{equation}\label{eq:smm_set}\qquad
\P\{X>0\} \geq\frac{[\E{X}]^2}{\sum_{v \in V_0} \mathbb E[X|v \in
\XX]\P\{
v \in\XX\}} \ge\frac{\E{X}}{\sup_{v \in V_0} \mathbb E[X|v \in
\XX]}.
\end{equation}

Let $a$ be a large positive constant.
Let $V_0=Y_n$, where $Y_n$ is defined as in the previous section,
and let $\XX$ be the set of nodes in $v \in Y_n$ satisfying
\begin{longlist}[(iii)]
\item[(i)] $m_n-1 \le S(v) \le m_n$,
\item[(ii)] $L_a(v)$,
\item[(iii)] neither
$B_a(v)$ nor $D_a(v)$.
\end{longlist}
Taking $X=|\XX|$, by Lemma~\ref{lemma_ab}(b), plus Lemmas \ref
{leading},~\ref{wellbehaved} and
\ref{Dnka}, we have
\begin{eqnarray}\label{EX}
 \E X &\ge&\E[ T_n(m_n)-T_n(m_n-1) ] \biggl( \frac{1}{n} - O\biggl(\frac{1}{a^7n}\biggr)
- O\biggl(\frac{e^{-a}}{n}\biggr) \!\biggr) - O\biggl(\frac{1}{n^{1/2}}\biggr)
\nonumber\hspace*{-35pt}
\\[-4pt]
\\[-12pt]
\nonumber
& \gg&1\hspace*{-35pt}
\end{eqnarray}
if $a$ is chosen large enough.

Recall that for all $v \in Y_n$, $|k(v)-n/e| \leq\sqrt{6n \log n}$.
For fixed $v \in Y_n$,
we need to estimate $\E\{ X | v \in\XX \}$.

The definitions of the coming two paragraphs are for the most part
depicted in Figure~\ref{vardefs}.
Write $j=j(v,v')$ for the integer $0 \leq j < n$ such that
$v$ and $v'$ are descendants of two distinct children of some node
$w=w(v,v') \in T_j$
[and let $j(v,v')=n$ if $v=v'$]. In other words, $j(v,v')$ is the
generation of the most recent common ancestor of $v$ and $v'$.
Supposing $0\le j(v,v') \le n-1$, let $x$ be the unique child of~$w$ on
the path from $w$ to $v'$.

Also, write $\bW=\bW(v)$ and $\bW'=(W_1',\ldots,W_{n}')=\bW(v')$.
Let $g=n-(j(v,v')+1)$, let $\tW_i =\tW_i(v,v')= W'_{j+i+1}-W'_{j+1}$
for $1 \leq i \leq g$,
and let $\mathbf{\tW}=(\tW_1,\ldots,\tW_g)$, so, in particular,
$W_{n}'=W'_{j+1}+\tW_g$.

Finally, let $k'=k(v')$, let $k_1=k_1(v,v')=k(x)-k(w)$ and let
$k_2=k'-k(x)$, so $k_1+k_2=k'-k(w)$.
Note that once $g$ and $k_2$ are fixed,
$\mathbf{\tW}$ is independent of $\bW$ and has law
$\WW_{g,k_2}$.

For integers $j$, $0 \leq j \leq n$, let $\FF_j = \FF_j(v) = \{v' \in
\XX,j(v,v')=j\}$ and let $F_j=F_j(v) = \E\{|\FF_j| | v \in\XX\}$.
Clearly, $F_n=1$, as $j=n$ implies $v=v'$.

\begin{figure}

\includegraphics{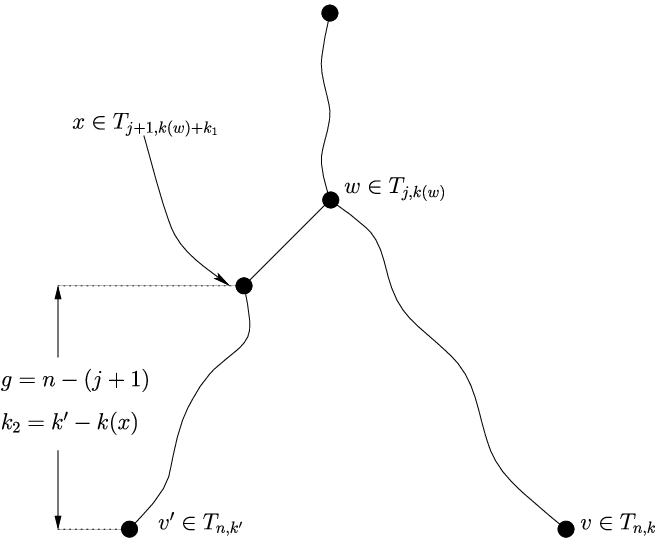}

\caption{An illustration of some key definitions from the proof of the upper bound of Theorem \protect\ref{T1}.}\label{vardefs}
\end{figure}

Now fix $v'$. If $v'\in\mathcal{X}$, then
by (i), (ii) and (iii), we have
\[
k(x) \leq3a(j+1),\qquad k'-k(w) \leq3a(g+1)=3a(n-j)
\]
and so
\begin{equation}\label{k1}
k_1+k_2 \leq3a(n-j),\qquad k_1 \leq\min\bigl(3a(j+1),3a(g+1)\bigr)
\end{equation}
%
and if $v \in\XX$, then, with $j=j(v,v')$, we have
\begin{equation}\label{Wj}
 W_j \ge\frac{j}{n}(m_n-1) +
\cases{
(-a), &\quad $\mbox{whatever the value of } j, $\vspace*{2pt}\cr
\min(j,n-j)^{1/40}, & \quad $\mbox{if }a^{40}\le j\le n-a^{40}.$}\hspace*{-35pt}
\end{equation}

Consider separately four ranges of $j$. First, if $n-a^{40}\le j\le n-1$,
then for sufficiently large $n$,\vadjust{\goodbreak} \eqref{k1} implies that $k_1+k_2 \le3a(n-j)$,
so $F_j$ is deterministically at most
\[
\sum_{l \leq3a(n-j)} |T_{n-j,l}| = \sum_{l\le3a(n-j)} \pmatrix{
{n-j+l-1}\vspace*{2pt}\cr{l}} \le3a^{41} (a^{40}+3a^{41})
^{a^{40}}.
\]
Hence, recalling that $a$ is now a fixed, large constant,
\begin{equation}\label{biggestj}
\sum_{n-a^{40}\le j\le n} F_j \ll1.
\end{equation}

Next, let $r=(2\log n)^{40}$. If $n-r<j\le n-a^{40}$, then for $n$
sufficiently large,
$j \ge n-j=g+1$, and \eqref{Wj} implies that
in order to have $W_n' \leq m_n$ we must have\looseness=-1
\[
\tW_{g} \le\frac{g+1}{n} m_n - g^{1/40} + 1\le g/e-g^{1/40}+2,
\]\looseness=0
the second inequality holding for sufficiently large $n$.
For fixed $k_1$, by Lem\-ma~\ref{ETnx} we thus have
\begin{eqnarray*}
\E\bigl\{ |\{v' \in\XX, j(v,v')=j,k_1(v,v')=k_1\}| | v \in\XX \bigr\} & \leq&
\E
T_g(g/e-g^{1/40}+2) \\
& \ll&\exp[ -e g^{1/40} ].
\end{eqnarray*}
Using \eqref{k1} to bound $k_1$ and summing over $j$ yields
\begin{equation}\label{bigj}
\sum_{n-r<j\le n-a^{40}} F_j \ll\sum_{a^{40}\le g \le r}
a(g+1) \exp[ -e g^{1/40} ] \ll1.
\end{equation}

Next, suppose $r\le j\le n-r$. By \eqref{Wj}, in order to have $W_n'
\leq m_n$, it must be that
\[
\tW_{g} \le\frac{g+1}{n} m_n - \min(j,n-j)^{1/40} + 1 \le
\frac{g}{e} - \log n.
\]
Since we also require $k_1(v,v') \leq3an$ by \eqref{k1}, we have
$F_j \le3an \E T_g(g/e-\log n)\ll1/n^2$ for this range of $j$, and, hence,
\begin{equation}\label{midj}
\sum_{r\le j\le n-r} F_j \ll\frac{1}{n}.
\end{equation}

Finally, suppose $0\le j\le r$. Here $g\ge n-r-1=n+O((\log n)^{40})$,
and since $L_a(v)$ holds by assumption, if $v \in\XX$, then
\[
W_j \geq\frac{j}{n} W_n - a > \frac{j}{n} m_n - (a+1).
\]
For each integer $b \in[-(a+1),2\log n)$, let $E_b$ be the event that
$W_j - (j/n) m_n \in[b,b+1)$. Also, let $E^*$ be the event that $W_j -
(j/n)m_n \geq\lceil2 \log n \rceil$. The events $\{E_b\dvtx
-(a+1)\leq b <
2\log n\}$ and $E^*$ together partition the event $\{v' \in\FF_j(v)\}
$, so by conditioning
\begin{equation}\label{conditioningbound}\quad
F_j \leq\max\Bigl( \E\{ |\FF_j| | v \in\XX,E^* \}, \max_{-(a+1)
\leq b
< 2\log n} \E\{ |\FF_j| | v \in\XX,E_b \}\Bigr).
\end{equation}

If $W_j\ge(j/n)m_n+2\log n$, then to have $v' \in\FF_j$, we must have
$\tW_g(v')\le g/e-\log n$
so, as in the case $r \leq j \leq n-r$, we have
\[
\E\{ |\FF_j| | v \in\XX,E^* \} \ll\frac{1}{n^2}.
\]
Now suppose $W_j-(j/n)m_n \in[b,b+1]$, where $b$ is an integer satisfying
$-a-1 \le b\le2\log n$. Note that if $b< (j^{1/40}-2)$ and $a^{40}\le
j\le r$, then $\FF_j(v)$ is necessarily empty due to $B_a(v)$,
so for such $j$ and $b$, $\E\{ |\FF_j| | v \in\XX,E_b \}=0$. For
the rest,
we further subdivide $\FF_j$, writing $\FF_{j,l}=\{v' \in\XX,
j(v,v')=j, k_1(v,v')=l\}$.
By \eqref{k1} we have
\[
\E\{ |\FF_j| | v \in\XX,E_b \}=\sum_{l \leq3a(j+1)} \E\{ |\FF
_{j,l}| | v \in\XX,E_b \}.
\]
Suppose additionally that
$W'_{j+1}(v')-W'_j(v')\in[\Delta,\Delta+1]$, where $\Delta$ is a
nonnegative integer.
Since $W_j(v')=W_j(v)$, in order to have $v' \in\FF_j$, by (i)
we require\footnote{The $m_n/n$ terms come from the ``skipped step''
from $W_j'$ to $W_{j+1}'$,
and the $(b+\Delta+3)$ comes from $b+1$, $\Delta+1$, and the
requirement that $S(v) \geq m_n-1$.}
\[
\tW_{g}-\frac{g}{n} m_n \in[m_n/n-(b+\Delta+3),m_n/n-(b+\Delta)].
\]
Since $0 \leq m_n/n<1$ and, for $n$ sufficiently large, $m_g-1 \leq
(g/n)m_n \leq m_g$,
this implies that, writing $\iii=[-(b+\Delta+4),-(b+\Delta-1)]$, we
must have
\[
\tW_{g} - m_g \in\iii.
\]
By (i) and (ii), we also require
\[
\tW_{i} \ge\frac{i}{n}W_n-b-\Delta-a-2 \ge\frac{i}{g}m_g -
b-\Delta
-a-3\qquad (i\le g).
\]
This implies that for all $i \le g$,
\[
\tW_i \geq\frac{i}{g} \tW_g - \max(b+\Delta+a-3,a-2).
\]
None of this depends on $l$,
so for any $0 \leq l \leq3a(j+1)$, writing $m=\max(b+\Delta+a-3,a-2)$,
\begin{eqnarray*}
&&\E\{ |\FF_{j,l}| | v \in\XX,E_b \}\\
&&\qquad\leq
\mathop{\sum_{1 \leq k_2\leq3a(j+1)}}_{\Delta\geq0} \E|\{v \in
T_{g,k_2} \dvtx S(v) -m_g \in\iii,L_{m}(v)\}| \\
&&\qquad\leq\mathop{\sum_{1 \leq k_2\leq3a(j+1)}}_{\Delta\geq0}
\Bigl(\E\bigl\{ T_{g,k_2}\bigl(m_g-(b+\Delta-1)\bigr) \bigr\} \\
&&\hspace*{65pt}\qquad{} \times\sup_{x \in\iii} \P\{ L_m(\bv_{g,k_2}) |
W_g(\bv _{g,k_2}) =m_g+x \} \Bigr) \\
&&\qquad\ll\sum_{\Delta\geq0} \E\bigl\{ T_{g}\bigl(m_g-(b+\Delta-1)\bigr) \bigr\} \cdot j
\cdot
\frac{\max(b+\Delta+a-3,a-2)^6}{n}\\
&&\qquad\ll\sum_{\Delta\geq0} ne^{-e(b+\Delta)} \cdot j \cdot\frac{\max
(b+\Delta+a-3,a-2)^6}{n} \\
&&\qquad\ll je^{-eb} (a+|b|)^6,
\end{eqnarray*}
the third-to-last line by Lemma~\ref{nearleading} and the
second-to-last by Lemma~\ref{ETnx}.
Summing over $0 \leq l \leq3a(j+1)$, it follows that
\[
\E\{ |\FF_j| | v \in\XX,E_b \} \ll j^2 e^{-eb} (a+|b|)^6.
\]
For $j \leq a^{40}$ this is $O(1)$ uniformly in $b$.
When $j > a^{40}$ we also have $b \geq j^{1/40}-2$
and for such $j$, the above bound is $O(j^3e^{-ej^{1/40}})$.
By (\ref{conditioningbound}) it follows that
for such $0 \leq j \leq r$,
\[
F_j \ll
\cases{1, & \quad $\mbox{if } j \leq a^{40},$\vspace*{2pt}\cr
\max(n^{-2}, j^3\exp(-ej^{1/40})), & \quad $\mbox{if } j > a^{40}.$}
\]
%
Summing on $j$, we find that
\begin{equation}\label{smallj}
\sum_{0\le j\le r} F_j \ll1.
\end{equation}
Together, \eqref{biggestj}--\eqref{midj} and \eqref{smallj}
imply that for every $v\in T$,
\[
\E[ X \dvtx v\in\mathcal{X} ] = O(1).
\]
Combining this estimate with \eqref{eq:smm_set} and \eqref{EX} shows that
$\P\{ X>0 \} \gg1$, and if $X>0$, then $M_n \le m_n$, so there exists an
absolute constant
$\varepsilon> 0$ such that for all~$n$,
\[
\P\{ M_n \le m_n \} \ge\eps.
\]
From here it is straightforward to show that $M_n \le\tM_n + O(1)$,
and we now do so. The next two lemmas, taken from~\cite{ford09pratt},
are standard bounds for BRW. As the proofs are short, we include them here.

\begin{lemma}\label{tight1}
For any BRW, positive integers $m,n$ and positive real numbers~$M$, $N$,
\[
\P\{ M_{m+n} \ge M+N \} \le\E\bigl[ (\P\{ M_n\ge N \} )^{T_m(M)}\bigr].
\]
\end{lemma}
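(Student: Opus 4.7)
The plan is to decompose $M_{m+n}$ over the ``intermediate'' generation $m$ and exploit the branching-Markov property of the BRW to reduce the event in question to an independent product over the finite (random) set of nodes in $T_m(M)$. Here one should interpret the exponent $T_m(M)$ in the statement as $|T_m(M)|$.

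For each $v\in T_m$, let $M_n^{(v)}$ denote the minimal displacement at level $n$ in the subtree rooted at $v$ (measured relative to $v$), so that
\[
M_{m+n} \;=\; \min_{v\in T_m}\bigl(S(v) + M_n^{(v)}\bigr).
\]
Hence $\{M_{m+n}\ge M+N\}$ forces $S(v)+M_n^{(v)}\ge M+N$ for every $v\in T_m$, and in particular for every $v\in T_m(M)$. Since $S(v)\le M$ for such $v$, each of these inequalities implies $M_n^{(v)}\ge N$. Therefore
\[
\{M_{m+n}\ge M+N\} \;\subseteq\; \bigcap_{v\in T_m(M)} \{M_n^{(v)}\ge N\}.
\]

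Now condition on the $\sigma$-algebra $\FF_m$ generated by the first $m$ generations of $\mathcal{T}$, which in particular determines the set $T_m(M)$ and the values $\{S(v):v\in T_m\}$. By the branching property, the subtrees rooted at distinct $v\in T_m$ are independent of one another and of $\FF_m$, and each $M_n^{(v)}$ has the same law as $M_n$. Thus
\[
\P\{M_{m+n}\ge M+N\mid \FF_m\} \;\le\; \prod_{v\in T_m(M)}\P\{M_n^{(v)}\ge N\mid \FF_m\}
\;=\; \bigl(\P\{M_n\ge N\}\bigr)^{|T_m(M)|}.
\]
Taking expectations yields the claimed bound. The only place where anything more than bookkeeping is needed is in verifying the passage from $S(v)+M_n^{(v)}\ge M+N$ to $M_n^{(v)}\ge N$, which is precisely why restricting to $v\in T_m(M)$ (rather than all of $T_m$) is essential; otherwise the factorization over independent subtrees would not give a clean product bound.
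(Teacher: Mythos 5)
Your proof is correct and is essentially the paper's argument, spelled out in more detail: restrict to the nodes in $T_m(M)$, observe that $M_{m+n}\ge M+N$ forces each of their subtrees to have minimal displacement at least $N$ relative to the generation-$m$ ancestor, then condition on $\FF_m$ and use independence of the subtrees to obtain the product bound.
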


\begin{pf}
Suppose $M_{m+n} \ge M+N$ and $T_m(M) = k$. For each of these $k$
individuals, all of their descendants in
generation $m+n$ are offset from their generation~$m$ ancestor by at
least $N$.
\end{pf}

\begin{lemma}\label{tight2}
Let $m,n$ be positive integers and let $M>0, \eps>0$ be real.
If $\E\{ (1-\eps)^{T_m(M)} \} < \frac12$, then
$\P\{ M_n < \tM_{n+m} - M \} \le\eps.$
In particular, the conclusion holds if $\P\{ T_m(M) < 1/\eps\} \le
\frac15$.
\end{lemma}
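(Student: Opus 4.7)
The plan is to prove the main statement by contradiction, then deduce the ``In particular'' clause by a short computation splitting the expectation at the threshold $1/\eps$.

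For the main statement, I would argue as follows. Suppose for contradiction that $\P\{M_n < \tM_{n+m}-M\} > \eps$, equivalently $\P\{M_n \ge \tM_{n+m}-M\} \le 1-\eps$. Applying Lemma~\ref{tight1} with the choice $N = \tM_{n+m}-M$ (so that $M+N = \tM_{n+m}$) gives
\[
\P\{M_{m+n} \ge \tM_{n+m}\} \le \E\bigl[(\P\{M_n \ge N\})^{T_m(M)}\bigr] \le \E\bigl[(1-\eps)^{T_m(M)}\bigr] < \tfrac12,
\]
where the final inequality is the hypothesis. On the other hand, since the displacements have exponential steps, $M_{n+m}$ has a continuous distribution, so by the definition of $\tM_{n+m}$ as the median we have $\P\{M_{m+n} \ge \tM_{n+m}\} \ge \tfrac12$. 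This contradicts the previous display, establishing $\P\{M_n < \tM_{n+m}-M\} \le \eps$.

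For the ``In particular'' clause, it suffices to verify that if $p := \P\{T_m(M) < 1/\eps\} \le 1/5$ then $\E[(1-\eps)^{T_m(M)}] < 1/2$. I would split the expectation according to whether $T_m(M) < 1/\eps$ or $T_m(M) \ge 1/\eps$: the first piece is bounded by $p \cdot 1$, and on the second event we have $(1-\eps)^{T_m(M)} \le (1-\eps)^{1/\eps} \le e^{-1}$, so the second piece is bounded by $(1-p)/e$. Hence
\[
\E[(1-\eps)^{T_m(M)}] \le p + \frac{1-p}{e} \le \frac{1}{5} + \frac{4}{5e} < \frac{1}{2},
\]
the final numerical inequality being the content of $1/5 + 4/(5e) \approx 0.494$.

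There is no real obstacle here: the heart of the argument is the contradiction via Lemma~\ref{tight1} plus the definition of the median, and the only quantitative check is the elementary bound $1/5 + 4/(5e) < 1/2$, which is the reason the threshold $1/5$ (rather than some larger constant) appears in the statement. Uniform continuity of the distribution of $M_{n+m}$ -- which guarantees that $\P\{M_{n+m}\ge \tM_{n+m}\}\ge 1/2$ -- is automatic from the exponential steps assumption, so no additional hypothesis is needed.
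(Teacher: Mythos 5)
Your proof is correct and takes essentially the same route as the paper: both rest on Lemma~\ref{tight1} combined with the definition of the median, and the ``in particular'' clause uses the identical split of the expectation at $T_m(M)=1/\eps$ (the paper writes the main step directly, deducing $M+q\ge\tM_{m+n}$ for the $\eps$-quantile $q$ of $M_n$, while you phrase it as a contradiction with $N=\tM_{n+m}-M$, which is only a cosmetic difference). One small remark: the continuity of $M_{n+m}$ that you invoke is not actually needed to get $\P\{M_{n+m}\ge\tM_{n+m}\}\ge\tfrac12$; this follows for any distribution directly from $\tM_{n+m}=\sup\{x:\P\{M_{n+m}<x\}<\tfrac12\}$ by letting $x\uparrow\tM_{n+m}$, which matters because Lemmas~\ref{tight1} and~\ref{tight2} are stated for an arbitrary BRW, not just those with exponential steps.
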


\begin{pf}
Let $q=\sup\{x\dvtx\P\{M_n < x\} < \eps\}$; then $\P\{M_n < q\}
\leq
\eps$.
By Lem\-ma~\ref{tight1},
\[
\P\{ M_{m+n} \ge M+q \}
\le\E\bigl[ ( \P\{ M_n \ge q \} )^{T_m(M)} \bigr] < \tfrac12.
\]
Therefore, $M+q \ge\tM_{m+n}$, and, thus, $\P\{ M_n < \tM_{m+n} -
M\}
\le
\P\{ M_n < q \} \le\eps$. To prove the second part,
assume that $\P\{ T_m(M) < 1/\eps\} \le\frac15$.
Then
\begin{eqnarray*}
\E\bigl\{ (1-\eps)^{T_m(M)} \bigr\}& \le&\P
\biggl\{ T_m(m) < \frac{1}{\eps} \biggr\} + \biggl(1-\P\biggl\{T_m(M)<\frac{1}{\eps}\biggr\}\biggr)
(1-\eps)^{1/\eps} \\
&\le&\frac15 + \frac{4}{5\er} < \frac12.\hspace*{200pt}\qed
\end{eqnarray*}%
\noqed\end{pf}

Now take $A$ such that $\P\{ T_1(A)<1/\eps \} \le\frac15$. By Lemma
\ref{tight2},
\[
\P\{ M_n \le\tM_n-A \} \le\P\{ M_n\le\tM_{n+1}-A \} \le\eps
\]
and, hence, $\tM_{n} \le M_n + A$, which completes the proof of the
upper bound in Theorem~\ref{T1}.

%
\section{\texorpdfstring{Proof of Theorem \protect\ref{TFord}}{Proof of Theorem 1.2}}\label{sec:tails}
%

Let $a>1/\er$ and $0<\eta<a\er/2$.
By Biggins' analog of Chernoff's inequality for the BRW \cite[Theorem
2]{Big77}, for large $r$ we have
$\P\{ T_r(ar) \le(a\er-\eta)^r \} \le\frac15$.
Let $r_0$ be large enough that, in addition, $\tM_{n+r}\ge\tM
_n+(1/\er
-\eta)r$
for all $r \geq r_0$ and all $n$ (such an $r_0$ exists by Theorem~\ref{T1}).
Now fix $r \geq r_0$ and let $M=ar$, let $m=r$, and let $\eps=(ae-\eta
)^{-r}$. We then have
$\P\{T_m(M)<1/\eps\} \le1/5$, so for all $n$, by the preceding bound
for $\tM_{n+r}$ and by Lemma~\ref{tight2}, we obtain that
\[
\P\{ M_n \le\tM_n-(a-1/\er+\eta)r \} \le
\P\{ M_n \le\tM_{n+r} - ar \} \le(a\er-\eta)^{-r}.
\]
The first estimate follows with $c_1=\frac{\log(a\er-\eta
)}{(a-1/\er
+\eta)}$.
Fix $a$, let $\eta\to0$, then let $a\to1/\er$,
so that $c_1\to\er$. This proves the first part of Theorem~\ref{TFord}.

For the second part, fix $0 < \eps< 1/50$ and let $\delta=\eps^2$,
so that
$\delta(1+\log((1-\eps/5)/\delta)) < \eps/5$.
Then choose $r_0$ sufficiently large that for all $r \geq r_0$, we have
$(1-\eps/5)r+2\lceil\log(2r) \rceil< r$, and for all $s \geq\log(2r_0)$,
we have $\P\{ T_s(2s) \le4^s \} \le e^{-1/\delta}$
(as in the first part, such an $r_0$ exists by \cite[Theorem 2]{Big77}).

Recall that if $h \in\mathbb{N}^1=T_1$ is a child of the root in $T$,
then $S(h)$ is
$\operatorname{Gamma}(h)$ distributed.
Thus, for any positive integer $r$, by a union bound
\begin{eqnarray*}
\P\bigl\{ T_1\bigl((1-\eps/5)r\bigr) \leq\delta r-1 \bigr\} &\leq&\sum_{h\le\delta r}
\P
\{
S(h) \geq(1-\eps/5)r\} \\
& =& \sum_{h\le\delta r} \frac{e^{-(1-\eps/5)r} ((1-\eps/5)
r)^h}{h!} \\
& \leq& e^{-(1-\eps/5)r} e^{(1+\log((1-\eps/5)/\delta))\delta r} \\
& \leq& e^{-(1-2\eps/5)r},
\end{eqnarray*}
the second-to-last inequality by Proposition~\ref{poisson}.

Write $s=\lceil\log(2r)\rceil$, and
let $E$ be the event that there are at least $4^s$ nodes in $T_{s+1}$
with displacement
at most $(1-\eps/5) r+2s < r$.
If $T_1((1-\eps/5) r) > \delta r$, then either $E$ occurs, or else for
each $h \leq
\lceil\delta r-1 \rceil$,
the number of $v \in T_{s+1}$ descending from $h\in T_1$
with $S(v)-S(h) \leq2s$ is less than $4^s$. The latter event has
probability less than
$(e^{-(1/\delta)})^{\delta r-1} = e^{(1/\delta)-r}$.
It follows that
\[
\P\{ E^c \} \leq e^{-(1-2\eps/5) r} + e^{(1/\delta)-r} \leq
e^{-(1-\eps/2)r},
\]
the last inequality holding for large $r$.
Finally, if $M_n > \tM_{n-(s+1)} + r$,
then for each node $v\in T_{s+1}$ with $S(v) \leq(1-\eps/5)r+2s$, for
all $w\in T_n$
descending from $v$ we must have $S(w)-S(v) \geq\tM_{n-(s+1)}$.
If $E$ occurs, then there are at least $4^s \geq2r$ such nodes $v$,
and so
\[
\P\bigl\{ M_n > \tM_{n-(s+1)} + r \bigr\} \leq e^{-(1-\eps/2)r}+2^{-2r} <
e^{-(1-\eps
) r},
\]
the last inequality holding for large $r$. Since $\tM_{n-(s+1)} \leq
\tM_n$,
the second part of Theorem~\ref{TFord} is proved by letting $\eps\to0$.

\section*{Acknowledgments}
The authors thank Hugh Montgomery for bringing paper~\cite{halmos_alms} to our attention.
K. Ford thanks the IAS for its hospitality and excellent working conditions.

%


\printaddresses

\end{document}